\newcommand{\bkR}{{\mathbb R}}
\newcommand\munderbar[1]{%
  \underaccent{\bar}{#1}}
\begin{document}
%%%%% title : short title may not be used but TITLE is required.
% \title{TITLE}
% \title[short title]{TITLE}
\title{Stability  of a leap-frog discontinuous Galerkin method for time-domain Maxwell's equations in anisotropic materials}

 \author{Ad{\'e}rito Ara{\'u}jo,
       S\'{\i}lvia  Barbeiro\corrauth and Maryam Khaksar Ghalati}
 \address{\ CMUC, Department of Mathematics,
University of Coimbra, Apartado 3008, EC Santa Cruz, 3001 - 501 Coimbra, Portugal. \\
           }
 \emails{{\tt alma@mat.uc.ptl} (A.~Ara{\'u}jo), {\tt silvia@mat.uc.pt} (S.~Barbeiro),
          {\tt maryam@mat.uc.pt} (M.~ Kh.~Ghalati)}
% \footnote and \thanks are not used in the heading section.
% Another acknowlegments/support of grants, state in Acknowledgments section
% \section*{Acknowledgments}

%%%%% Begin Abstract %%%%%%%%%%%
\begin{abstract}
In this work we discuss the numerical discretization of the  time-dependent Maxwell's equations using a fully explicit leap-frog type discontinuous Galerkin \linebreak method.  We present a sufficient condition for the stability, for cases of typical boundary conditions, either perfect electric, perfect magnetic or first order Silver-M\"{u}ller.  The bounds of the stability region point out the influence of not only  the mesh size but also the dependence on the choice of the numerical flux and the degree of the polynomials  used in the construction of the finite element space, making possible to balance accuracy and computational efficiency. In the model we consider heterogeneous anisotropic permittivity tensors which arise naturally in many applications of interest. Numerical results supporting the analysis are provided.
\end{abstract}
%%%%% end %%%%%%%%%%%

%%%%% AMS/PACs/Keywords %%%%%%%%%%%
%\pac{}
%\ams{52B10, 65D18, 68U05, 68U07}
\keywords{Maxwell's equations, fully explicit leap-frog discontinuous Galerkin method, stability.}

%%%% maketitle %%%%%
\maketitle

%%%% Start %%%%%%
\section{Introduction}

Maxwell's equations are  a fundamental set of partial differential equations which describe electromagnetic wave interactions with materials.
The advantages of using discontinuous Galerkin time domain (DGTD) methods on the simulation of electromagnetic waves propagation, when compared with classical finite-difference time-domain methods, finite volume time domain methods or finite element time domain methods, have been reported by several authors (see { \it e.g.} \cite{fezoui2005} and references therein cited for an overview). DGTD methods gather many desirable features such as being able to achieve high-order accuracy and  easily handle complex geometries. Moreover, they are suitable for parallel implementation on modern multi-graphics processing units. Local refinement strategies can be incorporated due to the possibility of considering irregular meshes with hanging nodes and local spaces of different orders.

Despite the relevance of the anisotropic case in applications ({\it e.g.} \cite{bornwolf1999, Leonhardt110, Yeh2009}),  most of the formulation of  the DGTD methods present in the literature are restricted to isotropic materials (\cite{hesthaven2008, hesthaven2004, lu2004}).  Motivated by our application of interest described in  \cite{araujo2013, Santos2015}, in the present paper we consider a model with an heterogeneous  anisotropic permittivity tensor. The treatment of anisotropic materials within a DGTD framework was discussed for instance in \cite{fezoui2005} (with central fluxes) and in \cite{konig2010} (with upwind fluxes). The stability analysis of DGTD methods for Maxwell's equations was considered in \cite{fezoui2005}, where the scheme that is defined with the central fluxes leads to a locally implicit in time method in the case of Silver-M\"{u}ller absorbing boundary conditions, and \cite{li2012}, where the scheme is defined with the upwind fluxes leading to an implicit method. 
 Our derivation extends the results in  \cite{fezoui2005} and  \cite{li2012} to a fully explicit in time method for both cases,  central fluxes and upwind fluxes.
 
 In this paper we study a fully explicit scheme for the numerical solution of Maxwell's equations that uses a nodal DG method (\cite{hesthaven2004}) for the integration in space with an explicit leap-frog type method for the time integration. We present a rigorous proof of stability showing the influence of the mesh size, the choice of the numerical flux and choice of the degree of the polynomials used in the construction of the finite element space and the boundary conditions, which can be either perfect electric, perfect magnetic or first order Silver-M\"uler. The obtained stability condition is of practical relevance since it gives an easy way to balance stability and accuracy. In our approach we take into account the formulation in two space dimensions as well as its generalization for the 3D case.  
 
This paper consists in six sections after this introduction. In Section 2,  we state the problem and in Section 3 we describe the formulation of the numerical method for the two-dimensional problem. In Section 4 we derive the stability result for the method described in the previous section.  We illustrate the theoretical results with numerical examples in Section 5. In the last section we extend the stability results for the three dimensional case.

%%%%%%%%%%%%%%%%%%%%%%%%%%%%%%%%%%%%%%%%%%%%%%%%%

\section{The governing equations}

The electromagnetic field consists of coupled electric and magnetic fields, known as electric field intensity, $E$,  and magnetic induction, $B$. The effects of these two fundamental fields on matter, can be characterized by the electric displacement and the magnetic field intensity vectors, frequently denoted by $D$ and $H$, respectively. The knowledge of the material properties can be used to derive a useful relation between $D$ and $E$ and between $B$ and $H$. Here we will consider the constitutive relations of the form $D = \epsilon E$ and $B=\mu H$,
 where $\epsilon$ is the medium's electric permittivity and $\mu$ is the medium's magnetic permeability.

In three-dimensional spaces for heterogeneous anisotropic linear media with no source, these equations can be written in the form (\cite{fezoui2005})
\begin{equation}\label{maxwell3D}
\epsilon \frac{\partial E}{\partial t} = \mbox{curl } H,\qquad \mu \frac{\partial H}{\partial t} = -\mbox{curl } E.
\end{equation}
In a similar fashion to \cite{yee1966}, we  decompose the electromagnetic wave in a transverse electric (TE) mode and a transverse magnetic (TM) mode, this way reducing  the number of equations implemented in our model. This assumption is appropriate when studying {\it e.g.} truly 2D photonic crystals \cite{Dobson1999}  or the electrodynamic properties of 2D materials like graphene \cite{Mikhailov2007}.

In what follows we shall analyse the time domain Maxwell's equations in the transverse electric (TE) mode, as in \cite{konig2010}, where  the only non-vanishing components of the electromagnetic fields are $E_x$, $E_y$ and $H_z$. For this case, and assuming no conductivity effects, the equations in the non-dimensional form are
\begin{align}
  \epsilon\frac{\partial E}{\partial t}  &= \nabla \times H \quad \textrm{in } \Omega\times(0,T_f] \label{Maxwell TE1}\\
  \mu\frac{\partial H}{\partial t} &= -\mbox{curl } E\label{Maxwell TE3} \quad \textrm{in } \Omega\times(0,T_f],
\end{align}
where $E=(E_x,E_y)$ and $H=(H_z)$.  This equations are set and solved on the bounded polygonal domain $\Omega \subset \mathbb{R}^2.$ %
Note that we use the following notation for the   vector and scalar curl operators
$$ \nabla \times H = \left(\frac{\partial H_z}{\partial y},-\frac{\partial H_z}{\partial x}\right)^T, \quad \mbox{curl } E=\frac{\partial E_y}{\partial x}-\frac{\partial E_x}{\partial y}.$$
 The electric permittivity of the medium,   $\epsilon$ and  the magnetic permeability of the medium $\mu$ are varying in space, being
$\epsilon$ an anisotropic tensor
\begin{equation}\epsilon=\begin{pmatrix} \epsilon_{xx} & \epsilon_{xy} \\ \epsilon_{yx} & \epsilon_{yy} \end{pmatrix},\label{tensor}\end{equation}
while we consider  isotropic permeability $\mu$.
We assume that electric permittivity tensor $\epsilon$ is symmetric and uniformly positive definite for almost every $(x,y) \in \Omega$, and  it is uniformly bounded with a strictly positive lower bound, {\it i.e.}, there are constants  $\underline\epsilon>0$ and $\overline \epsilon> 0$ such that, for almost every $(x,y)\in \Omega$,
$$\munderbar\epsilon |\xi|^2\le\xi^T\epsilon(x,y)\xi\le \overline\epsilon |\xi|^2,\qquad \forall \xi\in\mathbb{R}^2.$$
We also assume that there are constants $\underline \mu>0$ and $\overline \mu> 0$  such that, for almost every $(x,y) \in \Omega$,
$$\underline\mu \le\mu(x,y) \le \overline\mu.$$

Let the  unit outward normal vector to the boundary be denoted by $n$.
We can define an effective permittivity (\cite{konig2010}) by
$$\epsilon_{eff}=\frac{\det(\epsilon)}{n^T \epsilon n},$$
that is used to characterize the speed with which a wave travels along the direction of the unit normal
$$c =\sqrt{\frac{n^T \epsilon n }{\mu \det(\epsilon)}}.$$

The model equations (\ref{Maxwell TE1})--(\ref{Maxwell TE3}) must be complemented by proper boundary conditions. Here we consider the most common, either  the perfect electric conductor boundary condition (PEC)
\begin{equation}\label{PEC}
  n\times E = 0\qquad \textrm {on } \partial\Omega,
\end{equation}
 the perfect magnetic conductor boundary condition (PMC), %
\begin{equation}\label{PMC}
 n\times H = 0\qquad \textrm {on } \partial\Omega,
\end{equation}
or the first order Silver-M\"{u}ller absorbing boundary condition
\begin{equation}\label{SM}
  n\times E = c\mu n \times (H\times n)\qquad \textrm {on } \partial\Omega.
\end{equation}

Initial conditions
$$E(x,y,0) = E_0(x,y)\quad\textrm{and}\quad H(x,y,0)= H_0(x,y) \quad \textrm{in }\Omega,$$
must also be provided.

We can write  Maxwell's equations (\ref{Maxwell TE1})--(\ref{Maxwell TE3}) in a conservation form
\begin{eqnarray} \label{q_eq}
Q \frac{\partial q}{\partial t}+ \nabla \cdot F(q)=0 \quad \textrm{in } \Omega\times(0,T_f],
\end{eqnarray}
with
  \begin{eqnarray*}
Q=\begin{pmatrix} \epsilon & 0 \\ 0 & \mu \end{pmatrix}, \quad
 q=\begin{pmatrix}E_x\\ E_y\\H_z\end{pmatrix} \quad \mbox{and } \quad
F(q)= \begin{pmatrix}0 &H_z \\ -H_z & 0\\ -E_y & E_x\end{pmatrix}^T,
\end{eqnarray*}
where $\nabla \cdot$ denotes the divergence operator.

%%%%%%%%%%%%%%%%%%%%%%%%%%%%%%%%%%%%%%%%%%%%%%%%%

\section{A leap-frog discontinuous Galerkin method}

The aim of this section is to derive our computational method. We will consider a nodal discontinuous Galerkin method for the space discretization and a leap-frog method for the time integration.

%%%%%%%%%%%%
\subsection{The discontinuous Galerkin method}\label{DG_method}

Assume that the computational domain  $\Omega$  is partitioned into $K$  triangular elements  $T_k$ such that $\overline \Omega = \cup_k T_k$. For simplicity, we consider that the resulting mesh $\mathcal T_h$ is conforming, that is, the intersection of two elements is either empty or an edge.

Let $h_k$ be the diameter of the triangle $T_k \in \mathcal T_h$, and $h$ be the maximum element diameter,
$$h_k=\sup_{P_1,P_2 \in T_K} \|P_1-P_2\|, \quad h=\max_{T_k \in \mathcal T_h}\{h_k\}.$$
We assume that the mesh is regular in the sense that  there is a constant $\tau >0$ such that
\begin{equation}\label{tau}
\forall \, T_k \in \mathcal T_h, \quad \frac{h_k}{\tau_k} \leq \tau,
\end{equation}
where $\tau_k$ denotes the maximum diameter of a ball inscribed in $T_k$.

On each element $T_k$, the solution fields are approximated by  polynomials of degree less than or equal to $N$.
The global solution $q(x,y,t)$ is then assumed to be approximated by the piecewise $N$ order polynomials
$$q(x,y,t)\simeq \tilde q(x,y,t)= \bigoplus _{k=1}^K \tilde{q}_k(x,y,t),$$
defined as the direct sum of the $K$ local polynomial solutions $\tilde{q}_k(x,t)=(\tilde E_{x_k}, \tilde E_{y_k}, \tilde H_{z_k})$.
We use the notation
$\tilde E_x(x,y,t)=\bigoplus _{k=1}^K \tilde{E}_{x_k}(x,y,t)$, $\tilde E_y(x,y,t)$$=\bigoplus _{k=1}^K \tilde{E}_{y_k}(x,y,t)$,  $\tilde H_z(x,y,t)=\bigoplus _{k=1}^K \tilde{H}_{z_k}(x,y,t).$
The finite element space is then taken to be
$$V_{N}=\{v \in L^2(\Omega)^3: v|_{T_k} \in P_N(T_k)^3\},$$
where $P_N(T_k)$ denotes the space of polynomials of degree less than or equal to $N$ on $T_k$. %
The fields are expanded in terms of interpolating Lagrange polynomials $L_i(x,y)$,
$$ \tilde{q}_k(x,y,t)=\sum_{i=1}^{N_p} \tilde{q}_k(x_i,y_i,t)L_i(x,y)=\sum_{i=1}^{N_p} \tilde{q}_{ki}(t) L_i(x,y).$$
Here $N_p$ denotes the number of coefficients that are utilized, which is related with the polynomial order $N$ via $N_p=(N+1)(N+2)/2.$

In order to deduce the method, we start by multiplying equation (\ref{q_eq}) by test functions $v \in V_N$, usually the Lagrange polynomials, and integrate over each  element $T_k$. The next step is to employ one integration by parts and to substitute in the resulting contour integral the flux $F$ by a numerical flux $F^*$. Reversing the integration by parts yields
\begin{eqnarray*}
\int_{T_k} \left(Q \frac{\partial \tilde q}{\partial t} + \nabla \cdot F(\tilde q) \right) v(x,y) \, dx dy
= \int_{\partial T_k} n \cdot \left(F(\tilde q)-F^*(\tilde q)\right)  v(x,y) \, ds,\end{eqnarray*}
where $n$ is the outward pointing unit normal vector of the contour.

The approximate fields are allowed to be discontinuous across element boundaries.   In this way, we introduce the notation for the jumps of the field values across the interfaces of the elements, $[\tilde E]=\tilde E^--\tilde E^+$ and
$[\tilde H]=\tilde H^--\tilde H^+$, where the superscript $``+"$ denotes the neighboring element and the superscript $``-"$ refers to the local cell.  Furthermore we introduce, respectively, the cell-impedances and cell-conductances
$Z^{\pm}=\mu^{\pm}c^{\pm}$ and $Y^{\pm}=\left(Z^{\pm}\right)^{-1}$
where
$$c^{\pm} =\sqrt{\frac{n^T \epsilon^{\pm} n }{\mu^{\pm}\det(\epsilon^{\pm})}}.$$
At the outer cell boundaries we set $Z^+=Z^-$.

The coupling between elements is introduced via numerical flux, defined by
\begin{eqnarray*}
n \cdot \left(F(\tilde q)-F^*(\tilde q)\right)
&=&\begin{pmatrix}
 \frac{-n_y}{Z^++Z^-} \left(Z^+ [\tilde H_z]-\alpha \left(n_x[\tilde E_y]-n_y[\tilde E_x]\right)\right)\\
 \frac{n_x}{Z^++Z^-} \left(Z^+ [\tilde H_z]-\alpha \left(n_x[\tilde E_y]-n_y[\tilde E_x]\right)\right)\\
 \frac{1}{Y^++Y^-} \left(Y^+ \left(n_x[\tilde E_y]-n_y[\tilde E_x]\right)-\alpha[\tilde H_z]\right)
 \end{pmatrix}.
\end{eqnarray*}
The parameter $\alpha \in [0,1]$ in the numerical flux can be used to control dissipation. Taking $\alpha=0$ yields a non dissipative central flux while $\alpha=1$ corresponds to the classic upwind flux.

In order to discretize the boundary conditions we set $[\tilde E_x]=2\tilde E_x^-$, $[\tilde E_y]=2\tilde E_y^-$, $[\tilde H_z]=0$ and $[\tilde E_x]=0$, $[\tilde E_y]=0$,  $[\tilde H_z]=2\tilde H_z^-$, for PEC and PMC boundary conditions, respectively.  For Silver-M\"{u}ller absorbing boundary conditions, using the same kind of approach as in \cite{alvarez2014},  we consider, for upwind fluxes
$Z^-\tilde H_z^+ =n_x\tilde E_y^+-n_y\tilde E_x^+ $ or equivalently $\tilde H_z^+=Y^-(n_x\tilde E_y^+-n_y\tilde E_x^+)$
and, for central fluxes
$Z^-\tilde H_z^+ =( n_x\tilde E_y^--n_y\tilde E_x^-)$ and $Y^-(n_x\tilde E_y^+-n_y\tilde E_x^+)=\tilde H_z^-.$
This is equivalent to consider, for both upwind and central fluxes, $\alpha=1$ for numerical flux at the outer boundary and
$[\tilde E_x]=\tilde E_x^-$, $[\tilde E_y]=\tilde E_y^-$ and $[\tilde H_z]=\tilde H_z^-$.

%%%%%%%%%%%%
\subsection{Time discretization}

To define a fully discrete scheme, we divide the time interval $[0,T]$ into $M$ subintervals by points $0=t^0<t^1<\cdots<t^M=T$, where $t^m=m \Delta t$, $\Delta t$ is the time step size and $T+ \Delta t/2 \leq T_f$.
The unknowns related to the electric field are approximated at integer time-stations $t^m$ and are denoted by  $\tilde E_k^m=\tilde E_k(.,t^m)$. The unknowns related to the magnetic field are approximated at half-integer time-stations $t^{m+1/2}=(m+\frac{1}{2}) \Delta t$ and are denoted by  $\tilde H_k^{m+1/2}=\tilde H_k(.,t^{m+1/2})$.
With the above setting, we can now formulate the leap-frog DG method: given an initial approximation $(\tilde E_{x_k}^{0}, \tilde E_{y_k}^{0}, \tilde H_{z_k}^{1/2}) \in V_N$, for each $m=0,1,\ldots, M-1$,   find $(\tilde E_{x_k}^{m+1}, \tilde E_{y_k}^{m+1}, \tilde H_{z_k}^{m+1/2}) \in V_N$ such that, $\forall (u_k, v_k, w_k) \in V_N$,
\begin{eqnarray}
&& \left(\epsilon_{xx} \frac{ \tilde E_{x_k}^{m+1}-\tilde E_{x_k}^{m}}{\Delta t} +\epsilon_{xy}  \frac{ \tilde E_{y_k}^{m+1}-\tilde E_{y_k}^{m}}{\Delta t} ,u_k \right)_{T_k} = \left(\partial_y \tilde H_{z_k}^{m+1/2},
   u_k\right) _{T_k} \nonumber \\
&&\qquad+   \left(\frac{-n_y}{Z^++Z^-}\left(Z^+ [\tilde H_{z}^{m+1/2}]-\alpha \left(n_x[\tilde E_{y}^{m}]-n_y[\tilde E_{x}^{m}]\right)\right) ,u_k\right)_{\partial T_k},
\label{Fdis1}\\
&&\left(\epsilon_{yx} \frac{ \tilde E_{x_k}^{m+1}-\tilde E_{x_k}^{m}}{\Delta t} + \epsilon_{yy} \frac{ \tilde E_{y_k}^{m+1}-\tilde E_{y_k}^{m}}{\Delta t}, v_k \right) _{T_k } = -\left( \partial_x \tilde H_{z_k}^{m+1/2} , v_k \right)_{T_k} \nonumber \\
&&\qquad+ \left(\frac{n_x}{Z^++Z^-} \left(Z^+ [\tilde H_{z}^{m+1/2}]-\alpha \left(n_x[\tilde E_{y}^{m}]-n_y[\tilde E_{x}^{m}]\right)\right) , v_k\right)_{\partial T_k},
\label{Fdis2}\\
&&\left ( \mu \frac{ \tilde H_{z_k}^{m+3/2}-\tilde H_{z_k}^{m+1/2}}{\Delta t}, w_k\right)_{T_k} =  \left( \partial_y \tilde E_{x_k}^{m+1}
- \partial_x \tilde E_{y_k}^{m+1} ,w_k \right)_{T_k} \nonumber \\
&&\qquad+ \left (\frac{1}{Y^++Y^-} \left(Y^+ (n_x[\tilde E_y^{m+1}]-n_y[\tilde E_x^{m+1}])-\alpha[\tilde H_z^{m+1/2}]\right), w_k\right)_{\partial T_k}, \nonumber \\ \label{Fdis3}
\end{eqnarray}
where
$(\cdot,\cdot)_{T_k}$ and $(\cdot,\cdot)_{\partial T_k}$ denote the classical  $L^2(T_k)$ and $L^2(\partial T_k)$  inner-products.
The boundary conditions are considered as described in the previous section.

We want to emphasize that the scheme (\ref{Fdis1})--(\ref{Fdis3}) is fully explicit in time, in opposition to \cite{li2012}, where the scheme is defined with the upwind fluxes involving the unknowns $E_{k}^{m+1}$ and $H_{k}^{m+3/2}$ and to \cite{fezoui2005}, where the scheme that is defined with the central fluxes leads to a locally implicit time method in the case of Silver-M\"{u}ller absorbing boundary conditions.

%%%%%%%%%%%%%%%%%%%%%%%%%%%%%%%%%%%%%%%%%%%%%%%%%

\section{Stability analysis}\label{section_stability_2D}

The aim of this section is to provide a sufficient condition for the $L^2$-stability of the leap-frog DG method (\ref{Fdis1})--(\ref{Fdis3}).

Choosing $u_k = \Delta t \tilde{E}_{x_k}^{[m+1/2]}$, $v_k = \Delta t \tilde{E}_{y_k}^{[m+1/2]}$ and $w_k = \Delta t \tilde{H}_{z_k}^{[m+1]}$, where
$\tilde{E}^{[m+1/2]}=$ \linebreak $\left(\tilde{E}^{m}+\tilde{E}^{m+1}\right)/2$ and $\tilde{H}^{[m+1]}=\left(\tilde{H}^{m+1/2}+\tilde{H}^{m+3/2}\right)/2$,  we have

\begin{eqnarray}\label{fv1}
&&\left(\epsilon \tilde{E}_k^{m+1},\tilde{E}_k^{m+1}\right)_{T_k}-\left(\epsilon \tilde{E}_k^{m},\tilde{E}_k^{m}\right)_{T_k}
=2 \Delta t \left(\nabla \times \tilde{H}_{z_k}^{m+1/2},\tilde{E}_k^{[m+1/2]}\right)_{T_k}\nonumber\\
&&\qquad+ 2\Delta t\left( \frac{-n_y}{Z^++Z^-} \left(Z^+ [\tilde H_{z}^{m+1/2}]-\alpha \left(n_x[\tilde E_{y}^{m}]-n_y[\tilde E_{x}^{m}]\right)  \right), \tilde E_{x_k}^{[m+1/2]}\right)_{\partial T_k}\nonumber\\
&&\qquad+2\Delta t\left( \frac{n_x}{Z^++Z^-} \left(Z^+ [\tilde H_{z}^{m+1/2}]-\alpha \left(n_x[\tilde E_{y}^{m}]-n_y[\tilde E_{x}^{m}]\right)  \right),
\tilde E_{y_k}^{[m+1/2]}\right)_{\partial T_k} \nonumber 
\end{eqnarray}
and
\begin{eqnarray} \label{fv2}
&&\left(\mu \tilde{H}_{z_k}^{m+3/2},\tilde{H}_{z_k}^{m+3/2}\right)_{T_k}-\left(\mu \tilde{H}_{z_k}^{m+1/2},\tilde{H}_{z_k}^{m+1/2}\right)_{T_k}
= -2\Delta t \left( \mbox{curl }  \tilde E_{k}^{m+1} ,\tilde{H}_{z_k}^{[m+1]}\right)_{T_k}\nonumber\\
&&\qquad+ 2\Delta t \left( \frac{1}{Y^++Y^-} \left(Y^+ \left(n_x[\tilde E_y^{m+1}]-n_y[\tilde E_x^{m+1}]\right)-\alpha[\tilde H_z^{m+1/2}]\right),
\tilde H_{z_k}^{[m+1]}\right)_{\partial T_k}.  \nonumber \end{eqnarray}
Using the identity,
$$
\left( \mbox{curl }  \tilde E_{k}^{m+1} ,\tilde{H}_{z_k}^{[m+1]}\right)_{T_k}= \left(\nabla \times \tilde{H}_{z_k}^{[m+1]},\tilde{E}_k^{m+1}\right)_{T_k}+ \left(    n_x \tilde{E}_{y_k}^{m+1}  - n_y \tilde{E}_{x_k}^{m+1} , \tilde{H}_{z_k}^{[m+1]} \right)_{\partial T_k},
$$
summing (\ref{fv1}) and  (\ref{fv2})  from $m=0$ to $m=M-1$, and integrating by parts, we get
\begin{eqnarray}
&&\left(\epsilon \tilde{E}_k^{M},\tilde{E}_k^{M}\right)_{T_k}+\left(\mu \tilde{H}_{z_k}^{M+1/2},\tilde{H}_{z_k}^{M+1/2}\right)_{T_k}=
\left(\epsilon \tilde{E}_k^{0},\tilde{E}_k^{0}\right)_{T_k}+\left(\mu \tilde{H}_{z_k}^{1/2},\tilde{H}_{z_k}^{1/2}\right)_{T_k}\nonumber\\
&&\qquad +\Delta t \left(\nabla \times \tilde{H}_{z_k}^{1/2},\tilde{E}_k^{0}\right)_{T_k}
- \Delta t \left(\nabla \times \tilde{H}_{z_k}^{M+1/2},\tilde{E}_k^{M}\right)_{T_k}
+2 \Delta t \sum_{m=0}^{M-1}   A_k^m,\label{cond}
 \end{eqnarray}
where
\begin{eqnarray*}
A_k^m &=& \left( \frac{-n_y}{Z^++Z^-} \left(Z^+ [\tilde H_{z}^{m+1/2}]-\alpha \left(n_x[\tilde E_{y}^{m}]-n_y[\tilde E_{x}^{m}]\right)  \right),
\tilde E_{x_k}^{[m+1/2]}\right)_{\partial T_k}\nonumber\\
&&+\left( \frac{n_x}{Z^++Z^-} \left(Z^+ [\tilde H_{z}^{m+1/2}]-\alpha \left(n_x[\tilde E_{y}^{m}]-n_y[\tilde E_{x}^{m}]\right)  \right),
\tilde E_{y_k}^{[m+1/2]}\right)_{\partial T_k}\\
&&+\left( \frac{1}{Y^++Y^-} \left(Y^+ \left(n_x[\tilde E_y^{m+1}]-n_y[\tilde E_x^{m+1}]\right)-\alpha[\tilde H_z^{m+1/2}]\right),
\tilde H_{z_k}^{[m+1]}\right)_{\partial T_k} \\
&&- \left(   n_x \tilde{E}_{y_k}^{m+1} - n_y\tilde{E}_{x_k}^{m+1}, \tilde{H}_{z_k}^{[m+1]}  \right)_{\partial T_k}.
 \end{eqnarray*}

Let us denote by  $F^{int}$ the set of internal edges and $F^{ext}$ the set of edges that belong to the boundary $\partial \Omega$. Let $\nu_k$ be the set of indices of the neighboring elements of $T_k$. For each $i \in \nu_k$, we consider the internal edge $f_{ik}=T_i \cap T_k$,  and we denote by $n_{ik}$ the unit normal oriented from $T_i$ towards $T_k$. For each boundary edge $f_{k}=T_k\cap \partial \Omega$,  $n_k$ is taken to be the unitary outer normal vector to $f_k$.
 Summing over all elements $T_k \in \mathcal T_h$ we obtain
  \begin{eqnarray*}
\sum_{T_k  \in \mathcal T_h} A_k^m = B_1^m+B_2^m,
 \end{eqnarray*}
 where $B_1^m=B_{11}^m+B_{12}^m+B_{13}^m$ with
   \begin{eqnarray}
B_{11}^m  & =& \sum_{f_{ik} \in F^{int} } \int_{f_{ik}} \Bigg(  \frac{-(n_y)_{ki}}{Z_i+Z_k} \left(Z_i [\tilde H_{z_k}^{m+1/2}]-\alpha \left((n_x)_{ki}[\tilde E_{y_k}^{m}]-(n_y)_{ki}[\tilde E_{x_k}^{m}]\right)  \right)
\tilde E_{x_k}^{[m+1/2]}\nonumber\\
&&\quad + \frac{-(n_y)_{ik}}{Z_i+Z_k} \left(Z_k [\tilde H_{zi}^{m+1/2}]-\alpha \left((n_x)_{ik}[\tilde E_{yi}^{m}]-(n_y)_{ik}[\tilde E_{xi}^{m}] \right)\right)
\tilde E_{xi}^{[m+1/2]}\nonumber\\
&&\quad  -\frac{Y_i (n_y)_{ki}}{Y_i+Y_k} [\tilde E_{x_k}^{m+1}]
\tilde H_{z_k}^{[m+1]}
- \frac{Y_k (n_y)_{ik}}{Y_i+Y_k} [\tilde E_{xi}^{m+1}]
\tilde H_{zi}^{[m+1]}\nonumber\\
&&\quad  + (n_y)_{ki} \tilde{E}_{x_k}^{m+1}  \tilde{H}_{z_k}^{[m+1]}
+ (n_y)_{ik}\tilde{E}_{xi}^{m+1}  \tilde{H}_{zi}^{[m+1]} \Bigg)\, ds,\label{B1m}
 \end{eqnarray}
\begin{eqnarray}
B_{12}^m&=&\sum_{f_{ik} \in F^{int} } \int_{f_{ik}} \Bigg(   \frac{(n_x)_{ki}}{Z_i+Z_k} \left(Z_i [\tilde H_{z_k}^{m+1/2}]-\alpha \left((n_x)_{ki}[\tilde E_{y_k}^{m}]-(n_y)_{ki}[\tilde E_{x_k}^{m}]\right)  \right)
\tilde E_{y_k}^{[m+1/2]}\nonumber\\
&&\quad +\frac{(n_x)_{ik}}{Z_i+Z_k} \left(Z_k [\tilde H_{zi}^{m+1/2}]-\alpha \left((n_x)_{ik}[\tilde E_{yi}^{m}]-(n_y)_{ik}[\tilde E_{xi}^{m}]\right)  \right)
\tilde E_{yi}^{[m+1/2]}\nonumber\\
&&\quad  + \frac{Y_i (n_x)_{ki}}{Y_i+Y_k} [\tilde E_{y_k}^{m+1}]
\tilde H_{z_k}^{[m+1]}
+\frac{Y_k (n_x)_{ik}}{Y_i+Y_k} [\tilde E_{yi}^{m+1}]
\tilde H_{zi}^{[m+1]}\nonumber\\
&&\quad  - (n_x)_{ki}\tilde{E}_{y_k}^{m+1}  \tilde{H}_{z_k}^{[m+1]}
 -  (n_x)_{ik}\tilde{E}_{yi}^{m+1}  \tilde{H}_{zi}^{[m+1]}  \Bigg)\, ds,\label{B2m}
 \end{eqnarray}
\begin{eqnarray}
 B_{13}^m=-
\sum_{f_{ik} \in F^{int} } \int_{f_{ik}} \Bigg(
\frac{\alpha}{Y_i+Y_k} [\tilde H_{z_k}^{m+1/2}]
\tilde H_{z_k}^{[m+1]}
+\frac{\alpha}{Y_i+Y_k} [\tilde H_{zi}^{m+1/2}]
\tilde H_{zi}^{[m+1]}
\Bigg)\, ds 
\label{B3m}
 \end{eqnarray}
and $B_2^m$ has  the terms related with the outer boundary
  \begin{eqnarray}
B_2^m
&=& \sum_{f_{k} \in F^{ext} } \int_{f_{k}}\Bigg(  \frac{-(n_y)_{k}}{2 Z_k} \left(Z_k [\tilde H_{z_k}^{m+1/2}]-\alpha \left((n_x)_{k}[\tilde E_{y_k}^{m}]-(n_y)_{k}[\tilde E_{x_k}^{m}]\right)  \right)
\tilde E_{x_k}^{[m+1/2]}\nonumber\\
&&\quad +\frac{(n_x)_{k}}{2 Z_k} \left(Z_k [\tilde H_{z_k}^{m+1/2}]-\alpha \left((n_x)_{k}[\tilde E_{y_k}^{m}]-(n_y)_{k}[\tilde E_{x_k}^{m}]\right)  \right)
\tilde E_{y_k}^{[m+1/2]}\nonumber\\
&&\quad +\frac{1}{2 Y_k} \left(Y_k \left((n_x)_{k}[\tilde E_{y_k}^{m+1}]-(n_y)_{k}[\tilde E_{x_k}^{m+1}]\right)-\alpha[\tilde H_{z_k}^{m+1/2}]\right)
\tilde H_{z_k}^{[m+1]}\nonumber\\
&&\quad -  \left( (n_x)_{k} \tilde{E}_{y_k}^{m+1} - (n_y)_{k}\tilde{E}_{x_k}^{m+1}   \right)  \tilde{H}_{z_k}^{[m+1]} \Bigg)\, ds.\label{B4m}
 \end{eqnarray}

\begin{lemma}\label{B13}
Let $B_{11}^m$, $B_{12}^m$ and $B_{13}^m$ be defined by (\ref{B1m}), (\ref{B2m}) and (\ref{B3m}), respectively, and $B_1^m=B_{11}^m+B_{12}^m+B_{13}^m$. Then
 \begin{eqnarray*}
\sum_{m=0}^{M-1}  B_1^m
&\le&
\sum_{f_{ik} \in F^{int} } \int_{f_{ik}} \frac{1}{4(Z_i+Z_k)}\Bigg(-\alpha\left((n_y)_{ki}[\tilde E_{x_k}^{0}]-(n_x)_{ki} [\tilde E_{y_k}^{0}]\right)^2\nonumber\\
&&\quad+2\left((n_x)_{ki}\left( Z_i
\tilde E_{y_k}^{0}+Z_k \tilde E_{yi}^{0}\right)- (n_y)_{ki}\left( Z_i
\tilde E_{x_k}^{0}+Z_k
\tilde E_{xi}^{0}\right)\right)[\tilde H_{z_k}^{1/2}]\nonumber\\
&&\quad +\alpha\left(\left((n_y)_{ki}[\tilde E_{x_k}^{M}]-(n_x)_{ki} [\tilde E_{y_k}^{M}]\right)^2 - \left([\tilde H_{z_k}^{1/2}]^2  -[\tilde H_{z_k}^{M+1/2}]^2 \right)\right)
\nonumber\\
&&\quad+ 2 \left((n_y)_{ki}\left( Z_i
\tilde E_{x_k}^{M}+Z_k
\tilde E_{xi}^{M}\right)-(n_x)_{ki}\left( Z_i
\tilde E_{y_k}^{M}+Z_k \tilde E_{yi}^{M}\right)\right)[\tilde H_{z_k}^{M+1/2}]\Bigg)
\, ds.\nonumber
 \end{eqnarray*}
\end{lemma}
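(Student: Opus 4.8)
The plan is to establish the estimate in three stages: reduce the two-sided edge sums to single-jump expressions, then treat separately the flux contributions that do not carry the dissipation parameter $\alpha$ (which telescope in time) and those that do (which produce the squared endpoint terms). First I would combine, on each internal edge $f_{ik}$, the contribution associated with $T_k$ with that associated with $T_i$. Writing $n=n_{ki}$ and measuring all jumps from the $T_k$ side, so that $n_{ik}=-n_{ki}$, $[\tilde H_{zi}^{m+1/2}]=-[\tilde H_{z_k}^{m+1/2}]$ and likewise for the electric jumps, every pair of mirror terms in $B_{11}^m$, $B_{12}^m$ and $B_{13}^m$ collapses onto a single jump on $f_{ik}$. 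In particular the trace differences $\tilde E_{x_k}-\tilde E_{xi}$, $\tilde E_{y_k}-\tilde E_{yi}$ and $\tilde H_{z_k}-\tilde H_{zi}$ reassemble into jumps of the time-averaged fields, using that the jump of a time average equals the time average of the jump, e.g. $[\tilde E_x^{[m+1/2]}]=([\tilde E_x^{m}]+[\tilde E_x^{m+1}])/2$.

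Next I would split $B_1^m$ into a flux part (the terms carrying $Z_i,Z_k,Y_i,Y_k$ together with the consistency terms $\pm(n)_{ki}\tilde E^{m+1}\tilde H^{[m+1]}$) and a dissipative part (the $\alpha$-terms). For the flux part the goal is to recognise the staggered leap-frog structure: after simplification each edge term has the form $x^{m+1/2}\frac{y^m+y^{m+1}}{2}-y^{m+1}\frac{x^{m+1/2}+x^{m+3/2}}{2}$, with $x$ an $H$-jump at half-integer times and $y$ the appropriate $Z$-weighted electric trace at integer times. Summing over $m$ and using the telescoping identity
\[
\sum_{m=0}^{M-1}\Big(x^{m+1/2}\tfrac{y^m+y^{m+1}}{2}-y^{m+1}\tfrac{x^{m+1/2}+x^{m+3/2}}{2}\Big)=\tfrac12\big(x^{1/2}y^0-x^{M+1/2}y^M\big)
\]
leaves only the $m=0$ and $m=M$ cross terms, which are exactly the two lines of the claimed bound containing $[\tilde H_{z_k}^{1/2}]$ and $[\tilde H_{z_k}^{M+1/2}]$ with the weights $Z_i\tilde E_{\cdot_k}+Z_k\tilde E_{\cdot i}$.

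Finally I would handle the dissipative part. Combining the $\alpha$-terms of $B_{11}^m$ and $B_{12}^m$ gives, per edge, $-\frac{\alpha}{Z_i+Z_k}\sigma^m\sigma^{[m+1/2]}$ with $\sigma^m=(n_x)_{ki}[\tilde E_{y_k}^{m}]-(n_y)_{ki}[\tilde E_{x_k}^{m}]$, while $B_{13}^m$ contributes $-\frac{\alpha}{Y_i+Y_k}[\tilde H_{z_k}^{m+1/2}][\tilde H_{z_k}^{[m+1]}]$. Using the elementary identity $s(s+s')=\tfrac12(s^2-s'^2)+\tfrac12(s+s')^2$, each of these sums splits into a telescoping difference of squares at the endpoints plus a non-positive sum-of-squares remainder; discarding the latter — legitimate since $\alpha\ge 0$ and $Z_i,Z_k>0$ — turns the equality into the asserted inequality and yields precisely the $\alpha\,\sigma^2$ terms at $m=0,M$ and the $[\tilde H_{z_k}^{1/2}]^2$, $[\tilde H_{z_k}^{M+1/2}]^2$ endpoint terms. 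Note $(\sigma^0)^2=\big((n_y)_{ki}[\tilde E_{x_k}^{0}]-(n_x)_{ki}[\tilde E_{y_k}^{0}]\big)^2$, matching the statement.

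The main obstacle is the bookkeeping in the second stage: one must verify that the non-dissipative flux terms together with the consistency terms reorganise cleanly into the single staggered product $x^{m+1/2}\bar y-y^{m+1}\bar x$, carefully tracking the $Z$- and $Y$-weighting and every sign coming from $n_{ik}=-n_{ki}$ and from the jump antisymmetry. Once this reduction is correct, both the time telescoping and the completion of squares are routine.
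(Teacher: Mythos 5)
Your proposal is correct and follows essentially the same route as the paper's proof: both pair the two edge contributions via the impedance/conductance identities $Z_i/(Z_i+Z_k)=Y_k/(Y_i+Y_k)$, telescope the non-dissipative flux-plus-consistency terms in time so that only the $m=0$ and $m=M$ cross terms with $Z_i\tilde E_{\cdot k}+Z_k\tilde E_{\cdot i}$ survive, and complete squares on the $\alpha$-terms, discarding the resulting non-positive sums of squares. The only difference is organizational — you keep $\sigma^m=(n_x)_{ki}[\tilde E_{y_k}^m]-(n_y)_{ki}[\tilde E_{x_k}^m]$ as a single scalar and use one clean staggered telescoping identity, where the paper expands component-wise and invokes two sequence identities — which is a matter of bookkeeping, not of substance.
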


\begin{proof} Since
\begin{equation}\label{ZY1} \frac{Z_i}{Z_i+Z_k}+\frac{Y_i}{Y_i+Y_k}=\frac{Z_k}{Z_i+Z_k}+\frac{Y_k}{Y_i+Y_k}=1\end{equation}
and
\begin{equation}\label{ZY2}\frac{Z_i}{Z_i+Z_k}=\frac{Y_k}{Y_i+Y_k},\qquad \frac{Z_k}{Z_i+Z_k}=\frac{Y_i}{Y_i+Y_k},\end{equation}  
summing from $m=0$ to $m=M-1$, we conclude that 
%\begin{eqnarray*}
% \lefteqn{\left   B_{11}^m= }\\&&\frac{1}{2}
%\sum_{f_{ik} \in F^{int} }  \int_{f_{ik}} \Bigg(  \frac{-(n_y)_{ki}}{Z_i+Z_k} \left(Z_i [\tilde H_{z_k}^{m+1/2}]-\alpha \left((n_x)_{ki}[\tilde E_{y_k}^{m}]-(n_y)_{ki}[\tilde E_{x_k}^{m}]\right)  \right)
%\tilde E_{x_k}^{m}\nonumber\\
% &&\quad +\frac{-(n_y)_{ki}}{Z_i+Z_k} \left(-\alpha \left((n_x)_{ki}[\tilde E_{y_k}^{m}]-(n_y)_{ki}[\tilde E_{x_k}^{m}]\right)  \right)
%\tilde E_{x_k}^{m+1}\nonumber\\
% &&\quad +\frac{-(n_y)_{ik}}{Z_i+Z_k} \left(Z_k [\tilde H_{zi}^{m+1/2}]-\alpha \left((n_x)_{ik}[\tilde E_{yi}^{m}]-(n_y)_{ik}[\tilde E_{xi}^{m}]\right)  \right)
%\tilde E_{xi}^{m}\nonumber\\
% &&\quad + \frac{-(n_y)_{ik}}{Z_i+Z_k} \left(-\alpha \left((n_x)_{ik}[\tilde E_{yi}^{m}]-(n_y)_{ik}[\tilde E_{xi}^{m}]\right)  \right)
%\tilde E_{xi}^{m+1}\nonumber\\
% &&\quad -\frac{Y_i (n_y)_{ki}}{Y_i+Y_k} [\tilde E_{x_k}^{m+1}]
%\tilde H_{z_k}^{m+3/2}
%- \frac{Y_k (n_y)_{ik}}{Y_i+Y_k} [\tilde E_{xi}^{m+1}]
%\tilde H_{zi}^{m+3/2}\\
% &&\quad +(n_y)_{ki} \tilde{E}_{x_k}^{m+1}  \tilde{H}_{z_k}^{m+3/2}
%+  (n_y)_{ik} \tilde{E}_{xi}^{m+1}  \tilde{H}_{zi}^{m+3/2}  \Bigg)\, ds.
% \end{eqnarray*}
%%
% Summing from $m=0$ to $m=M-1$ we conclude that
% %
  \begin{eqnarray*}
\sum_{m=0}^{M-1}   B_{11}^m&=&
\sum_{f_{ik} \in F^{int} }\int_{f_{ik}} \frac{(n_y)_{ki}}{2(Z_i+Z_k)} \Bigg(- \left(Z_i \tilde E_{x_k}^{0} +Z_k \tilde E_{xi}^{0}\right)[\tilde H_{z_k}^{1/2}] \\
&&\quad + \alpha \left((n_x)_{ki}[\tilde E_{y_k}^{0}]-(n_y)_{ki}[\tilde E_{x_k}^{0}]  \right)
[\tilde E_{x_k}^{0}]\nonumber\\
&&\quad+ \alpha\sum_{m=0}^{M-1} \left((n_x)_{ki}[\tilde E_{y_k}^{m+1}]-(n_y)_{ki}[\tilde E_{x_k}^{m+1}]
+   (n_x)_{ki}[\tilde E_{y_k}^{m}]-(n_y)_{ki}[\tilde E_{x_k}^{m}]  \right)
[\tilde E_{x_k}^{m+1}]\nonumber\\
&&\quad+ \left(Z_i \tilde E_{x_k}^{M} + Z_k\tilde E_{xi}^{M}\right)[\tilde H_{z_k}^{M+1/2}] - \alpha \left((n_x)_{ki}[\tilde E_{y_k}^{M}]-(n_y)_{ki}[\tilde E_{x_k}^{M}]  \right)
[\tilde E_{x_k}^{M}]\Bigg)\, ds. \nonumber
 \end{eqnarray*}
 In the same way, for $B_{12}^m$  we have
  \begin{eqnarray*}
\sum_{m=0}^{M-1}   B_{12}^m&=&
\sum_{f_{ik} \in F^{int} } \int_{f_{ik}}\frac{(n_x)_{ki}}{2(Z_i+Z_k)} \Bigg(  \left(Z_i \tilde E_{y_k}^{0}+Z_k\tilde E_{yi}^{0}\right)[\tilde H_{z_k}^{1/2}]
-  \left((n_x)_{ki}[\tilde E_{y_k}^{0}]-(n_y)_{ki}[\tilde E_{x_k}^{0}]  \right)
[\tilde E_{y_k}^{0}]\nonumber\\
&&\quad- \alpha \sum_{m=0}^{M-1}\left( (n_x)_{ki}[\tilde E_{y_k}^{m+1}]-(n_y)_{ki}[\tilde E_{x_k}^{m+1}]  +(n_x)_{ki}[\tilde E_{y_k}^{m}]-(n_y)_{ki}[\tilde E_{x_k}^{m}]  \right)
[\tilde E_{y_k}^{m+1}]\nonumber\\
&&\quad- \left(Z_i \tilde E_{y_k}^{M}+Z_k\tilde E_{yi}^{M}\right)[\tilde H_{z_k}^{M+1/2}]+\alpha \left((n_x)_{ki}[\tilde E_{y_k}^{M}]-(n_y)_{ki}[\tilde E_{x_k}^{M}]  \right)
[\tilde E_{y_k}^{M}]\Bigg)\, ds, \nonumber
 \end{eqnarray*}
 and for $B_{13}^m$
 \begin{eqnarray*}
\sum_{m=0}^{M-1}   B_{13}^m=
-\sum_{m=0}^{M-1} \sum_{f_{ik} \in F^{int} }  \int_{f_{ik}} \frac{\alpha}{2(Y_i+Y_k)} [\tilde H_{z_k}^{m+1/2}]
\left([\tilde H_{z_k}^{m+1/2}]+[\tilde H_{z_k}^{m+3/2}]\right) \, ds.
\nonumber
 \end{eqnarray*}

Observing that, for general sequences $\{a^m\}$ and $\{b^m\}$, hold
 \begin{eqnarray*}
 \sum_{m=0}^{M-1}  \left(a^{m+1}  + a^{m} \right)a^{m+1}&=& \frac{1}{2}
\left(-(a^0)^2  +(a^M)^2 + \sum_{m=0}^{M-1}  \left(a^m+a^{m+1}\right)^2 \right), \end{eqnarray*}
 \begin{eqnarray*}
 \sum_{m=0}^{M-1}  \left(a^{m+1}  + a^{m} \right)b^{m+1}&=& \frac{1}{2}
\Bigg(-a^0b^0  +a^Mb^M+ \sum_{m=0}^{M-1}  \left(a^mb^m+2a^{m}b^{m+1}+a^{m+1}b^{m+1}\right) \Bigg),
\end{eqnarray*}
 we get
 \begin{eqnarray*}
&&\sum_{m=0}^{M-1}  \left( B_{11}^m+  B_{12}^m\right) \le \sum_{f_{ik} \in F^{int} } \int_{f_{ik}} \frac{1}{4(Z_i+Z_k)}\Bigg(-\alpha(n_y)_{ki}^2
 \left(-[\tilde E_{x_k}^0]^2  +[\tilde E_{x_k}^{M}]^2 \right)\\
&&\quad +\alpha(n_x)_{ki}(n_y)_{ki}\left(-[\tilde E_{x_k}^0][\tilde E_{y_k}^0]   +[\tilde E_{x_k}^{M}][\tilde E_{y_k}^{M}] \right)-2(n_y)_{ki} \left(Z_i \tilde E_{x_k}^{0} +Z_k \tilde E_{xi}^{0}\right)[\tilde H_{z_k}^{1/2}] \nonumber\\
&&\quad+2\alpha(n_y)_{ki} \left((n_x)_{ki}[\tilde E_{y_k}^{0}]-(n_y)_{ki}[\tilde E_{x_k}^{0}]  \right)
[\tilde E_{x_k}^{0}]+ 2(n_y)_{ki} \left(Z_i \tilde E_{x_k}^{M} + Z_k\tilde E_{xi}^{M}\right)[\tilde H_{z_k}^{M+1/2}]\nonumber\\
&&\quad- 2\alpha(n_y)_{ki}  \left((n_x)_{ki}[\tilde E_{y_k}^{M}]-(n_y)_{ki}[\tilde E_{x_k}^{M}]  \right)
[\tilde E_{x_k}^{M}]-\alpha(n_x)_{ki}^2\left(-[\tilde E_{y_k}^0]^2  +[\tilde E_{y_k}^{M}]^2 \right)\nonumber\\
&&\quad+ \alpha(n_x)_{ki}(n_y)_{ki}\left(-[\tilde E_{x_k}^0][\tilde E_{y_k}^0]   +[\tilde E_{x_k}^{M}][\tilde E_{y_k}^{M}] \right)+2(n_x)_{ki} \left(Z_i \tilde E_{y_k}^{0}+Z_k\tilde E_{yi}^{0}\right)[\tilde H_{z_k}^{1/2}]
\nonumber\\
&&\quad- 2\alpha(n_x)_{ki}  \left((n_x)_{ki}[\tilde E_{y_k}^{0}]-(n_y)_{ki}[\tilde E_{x_k}^{0}] \right)
[\tilde E_{y_k}^{0}]- 2(n_x)_{ki} \left(Z_i \tilde E_{y_k}^{M}+Z_k\tilde E_{yi}^{M}\right)[\tilde H_{z_k}^{M+1/2}]\nonumber\\
&&\quad+2\alpha(n_x)_{ki}  \left((n_x)_{ki}[\tilde E_{y_k}^{M}]-(n_y)_{ki}[\tilde E_{x_k}^{M}]  \right)
[\tilde E_{y_k}^{M}]\Bigg)\, ds. \nonumber
 \end{eqnarray*}
We also have
\begin{eqnarray*}
 \sum_{m=0}^{M-1}   B_{13}^m&=&-\sum_{f_{ik} \in F^{int} }
 \int_{f_{ik}} \frac{\alpha}{4(Y_i+Y_k)} \Bigg([\tilde H_{z_k}^{1/2}]^2  -[\tilde H_{z_k}^{M+1/2}]^2+\sum_{m=0}^{M-1}  \left([\tilde H_{z_k}^{m+1/2}]+[\tilde H_{z_k}^{m+3/2}] \right)^2\Bigg)
\, ds\nonumber\\
&\leq &-\sum_{f_{ik} \in F^{int} }
 \int_{f_{ik}} \frac{\alpha}{4(Y_i+Y_k)} \left([\tilde H_{z_k}^{1/2}]^2  -[\tilde H_{z_k}^{M+1/2}]^2 \right)
\, ds,\nonumber
 \end{eqnarray*}
which concludes the proof. 
\end{proof}

Let us now analyze the term $B_2^m$ for different kinds of boundary conditions.

 \begin{lemma}\label{B4}
 Let $B_2^m$ be defined by (\ref{B4m}). Then
  \begin{eqnarray*}
 \sum_{m=0}^{M-1}B_2^m
&\leq & \sum_{f_{k} \in F^{ext} } \int_{f_{k}} \frac{\beta_1}{4Z_k} \Bigg(  -\left((n_y)_{k}\tilde E_{x_k}^{0}-(n_x)_{k}\tilde E_{y_k}^{0} \right)^2+ \left((n_y)_{k}\tilde E_{x_k}^{M}-(n_x)_{k}\tilde E_{y_k}^{M} \right)^2\Bigg)\\
&&\quad+\frac{\beta_2}{2}\Bigg( \tilde H_{z_k}^{1/2}  \left((n_x)_{k}\tilde E_{y_k}^{0}-(n_y)_{k}\tilde E_{x_k}^{0} - \frac{\beta_3}{2Y_k} \tilde H_{z_k}^{1/2}  \right)\\
&&\quad-\tilde H_{z_k}^{M+1/2}\left((n_x)_{k}\tilde E_{y_k}^{M}-(n_y)_{k}\tilde E_{x_k}^{M} - \frac{\beta_3}{2Y_k} \tilde H_{z_k}^{M+1/2}\right)\Bigg)\, ds,
 \end{eqnarray*}
 where $\beta_1=\alpha, \beta_2=0$ for PEC, $\beta_1=0, \beta_2=1$, $\beta_3=\alpha$ for PMC,  and $\beta_1=\beta_2=\frac12$, $\beta_3=1$ for Silver-M\"uller boundary conditions.
\end{lemma}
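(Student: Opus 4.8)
The plan is to handle the three boundary conditions one at a time: in each case I substitute the corresponding discretized jumps (from Section~\ref{DG_method}) into the edge expression (\ref{B4m}), simplify the integrand on a single boundary edge $f_k$, and then sum over $m$ using the two summation-by-parts identities already employed in the proof of Lemma~\ref{B13}, together with the companion identity $\sum_{m=0}^{M-1}(a^m+a^{m+1})a^m=\frac12\big((a^0)^2-(a^M)^2+\sum_{m=0}^{M-1}(a^m+a^{m+1})^2\big)$, which follows from them after relabelling. Throughout I abbreviate $G^m:=(n_x)_k\tilde E_{y_k}^m-(n_y)_k\tilde E_{x_k}^m$ and $h^{m+1/2}:=\tilde H_{z_k}^{m+1/2}$, so that on $f_k$ the first two summands of (\ref{B4m}) carry the common factor $-(n_y)_k\tilde E_{x_k}^{[m+1/2]}+(n_x)_k\tilde E_{y_k}^{[m+1/2]}=(G^m+G^{m+1})/2$, and recall $\tilde H_{z_k}^{[m+1]}=(h^{m+1/2}+h^{m+3/2})/2$. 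On a boundary edge the local trace coincides with the field value on $T_k$.

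For PEC I use $[\tilde E_x^m]=2\tilde E_{x_k}^m$, $[\tilde E_y^m]=2\tilde E_{y_k}^m$, $[\tilde H_z^{m+1/2}]=0$. The two $\tilde H_{z_k}^{[m+1]}$ contributions (the third and fourth summands) then cancel identically, and the first two collapse to $-\frac{\alpha}{2Z_k}G^m(G^m+G^{m+1})$. Summing with the companion identity and discarding the nonnegative term $\frac{\alpha}{4Z_k}\sum_m(G^m+G^{m+1})^2$ (legitimate since $\alpha\ge0$, $Z_k>0$) yields $-\frac{\alpha}{4Z_k}\big((G^0)^2-(G^M)^2\big)$, which is the asserted bound with $\beta_1=\alpha$, $\beta_2=0$ once one notes $(G^m)^2=\big((n_y)_k\tilde E_{x_k}^m-(n_x)_k\tilde E_{y_k}^m\big)^2$.

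For PMC the tangential jumps vanish while $[\tilde H_z^{m+1/2}]=2h^{m+1/2}$, so the integrand reduces to $\frac12 h^{m+1/2}(G^m+G^{m+1})-\frac{\alpha}{Y_k}h^{m+1/2}\tilde H_{z_k}^{[m+1]}-G^{m+1}\tilde H_{z_k}^{[m+1]}$. Expanding the half-sums, the symmetric cross term $\frac12 h^{m+1/2}G^{m+1}$ cancels against $-\frac12 G^{m+1}h^{m+1/2}$, leaving $\frac12 h^{m+1/2}G^m-\frac12 G^{m+1}h^{m+3/2}$, which telescopes to $\frac12 h^{1/2}G^0-\frac12 G^M h^{M+1/2}$; the leftover $-\frac{\alpha}{2Y_k}h^{m+1/2}(h^{m+1/2}+h^{m+3/2})$ sums via the companion identity and, after dropping its nonnegative square-sum, to $-\frac{\alpha}{4Y_k}\big((h^{1/2})^2-(h^{M+1/2})^2\big)$, giving the $\beta_1=0$, $\beta_2=1$, $\beta_3=\alpha$ bound. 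The Silver-M\"uller case ($\alpha=1$ at the boundary, all three jumps equal to the local trace) combines both mechanisms: the first two summands contribute $\frac{1}{4Z_k}(Z_k h^{m+1/2}-G^m)(G^m+G^{m+1})$ and the last two give $-\frac14 G^{m+1}(h^{m+1/2}+h^{m+3/2})-\frac{1}{4Y_k}h^{m+1/2}(h^{m+1/2}+h^{m+3/2})$. The mixed $h\cdot G$ terms telescope exactly as before to $\frac14 h^{1/2}G^0-\frac14 G^M h^{M+1/2}$, while the two quadratic pieces are bounded above by $-\frac{1}{8Z_k}\big((G^0)^2-(G^M)^2\big)$ and $-\frac{1}{8Y_k}\big((h^{1/2})^2-(h^{M+1/2})^2\big)$, reproducing the $\beta_1=\beta_2=\frac12$, $\beta_3=1$ statement.

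The algebra is routine once the jumps are inserted; the delicate point — and the only genuine obstacle — is the correct pairing of the mixed $h\cdot G$ terms across consecutive time levels. This telescoping succeeds only because the half-integer index of $\tilde H$ meshes with the integer index of $\tilde E$ in precisely the right way and because the symmetric middle term cancels exactly, so a single misaligned index or sign would spoil it. One must likewise track the signs ($\alpha\in[0,1]$, $Z_k,Y_k>0$) to guarantee that each discarded sum of squares carries the sign needed to \emph{preserve} the inequality rather than reverse it.
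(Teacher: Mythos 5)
Your proof is correct and follows essentially the same route as the paper: substituting the discrete boundary jumps into (\ref{B4m}) case by case, reducing the first two summands via the common factor $(n_x)_k\tilde E_{y_k}^{[m+1/2]}-(n_y)_k\tilde E_{x_k}^{[m+1/2]}$, telescoping the mixed $h\cdot G$ terms, and discarding the nonnegative sums of squares produced by the summation-by-parts identities. All intermediate expressions and final constants $(\beta_1,\beta_2,\beta_3)$ match the paper's computation.
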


\begin{proof} First we consider PEC boundary
conditions. We have
   \begin{eqnarray*}
B_2^m
&=& \sum_{f_{k} \in F^{ext} } \int_{f_{k}}\frac{ \alpha}{ Z_k}\Bigg(
 (n_y)_{k}  \left((n_x)_{k}\tilde E_{y_k}^{m}-(n_y)_{k}\tilde E_{x_k}^{m}\right)
\tilde E_{x_k}^{[m+1/2]}\\
&&- (n_x)_{k} \left((n_x)_{k}\tilde E_{y_k}^{m}-(n_y)_{k}\tilde E_{x_k}^{m}\right)
\tilde E_{y_k}^{[m+1/2]} \Bigg)\, ds.
 \end{eqnarray*}
  Summing from $m=0$ to $m=M-1$ we obtain
      \begin{eqnarray*}
  \sum_{m=0}^{M-1} B_2^m
&=& \sum_{f_{k} \in F^{ext} } \int_{f_{k}} \frac{\alpha}{4 Z_k}\Bigg( -\left((n_x)_{k}\tilde E_{y_k}^{0}-(n_y)_{k}\tilde E_{x_k}^{0} \right)^2  +\left((n_x)_{k}\tilde E_{y_k}^{M}-(n_y)_{k}\tilde E_{x_k}^{M} \right)^2\\
&&- 4\sum_{m=0}^{M-1}  \left((n_x)_{k}\tilde E_{y_k}^{[m+1/2]}
-(n_y)_{k}\tilde E_{x_k}^{[m+1/2]}\right)^2
\Bigg)\, ds\\
&\leq& \sum_{f_{k} \in F^{ext} } \int_{f_{k}}\frac{\alpha}{4 Z_k}\Bigg( -\left((n_x)_{k}\tilde E_{y_k}^{0}-(n_y)_{k}\tilde E_{x_k}^{0} \right)^2 + \left((n_x)_{k}\tilde E_{y_k}^{M}-(n_y)_{k}\tilde E_{x_k}^{M} \right)^2\Bigg) \, ds.
 \end{eqnarray*}
  For PMC boundary conditions we have
 \begin{eqnarray*}
B_2^m
&=& \sum_{f_{k} \in F^{ext} } \int_{f_{k}} \Bigg(
\tilde H_{z_k}^{m+1/2}\left((n_x)_{k} \tilde E_{y_k}^{[m+1/2]}-(n_y)_{k}\tilde E_{x_k}^{[m+1/2]}\right)\nonumber\\
&&- \left(\frac{\alpha }{ Y_k}\tilde H_{z_k}^{m+1/2}+ (n_x)_{k}\tilde E_{y_k}^{m+1}-(n_y)_{k}\tilde E_{x_k}^{m+1}\right)
\tilde H_{z_k}^{[m+1]}\Bigg)\, ds.
 \end{eqnarray*}
  Summing from $m=0$ to $m=M-1$ results
      \begin{eqnarray*}
 \sum_{m=0}^{M-1} B_2^m
%&& =\sum_{f_{k} \in F^{ext} }\int_{f_{k}} \Bigg(\frac{\tilde H_{z_k}^{1/2}}{2}  \left((n_x)_{k}\tilde E_{y_k}^{0}-(n_y)_{k}\tilde E_{x_k}^{0} \right)-\frac{\alpha}{4Y_k} \left(\tilde H_{z_k}^{1/2}\right)^2   \\
%&&\quad- \sum_{m=0}^{M-1}    \frac{\alpha }{Y_k}  \left(\tilde H_{z_k}^{[m+1]}\right)^2 \\
%&&\quad -\frac{\tilde H_{z_k}^{M+1/2}}{2}\left((n_x)_{k}\tilde E_{y_k}^{M}-(n_y)_{k}\tilde E_{x_k}^{M} \right)
%-\frac{\alpha}{4Y_k}\left(\tilde H_{z_k}^{M+1/2}\right)^2 \Bigg)\, ds\\
&\le&\frac12 \sum_{f_{k} \in F^{ext} }\int_{f_{k}} \Bigg( \tilde H_{z_k}^{1/2}   \left((n_x)_{k}\tilde E_{y_k}^{0}-(n_y)_{k}\tilde E_{x_k}^{0} - \frac{\alpha}{2Y_k} \tilde H_{z_k}^{1/2}  \right)\\
&& -\tilde H_{z_k}^{M+1/2}\left((n_x)_{k}\tilde E_{y_k}^{M}-(n_y)_{k}\tilde E_{x_k}^{M} - \frac{\alpha}{2Y_k} \tilde H_{z_k}^{M+1/2}\right)\Bigg)\, ds.
 \end{eqnarray*}

 For Silver-M\"uller absorbing boundary conditions we have
    \begin{eqnarray*}
 B_2^m&=
& \frac12 \sum_{f_{k} \in F^{ext} }\int_{f_{k}}  \Bigg(
\left(-(n_y)_{k} \tilde H_{z_k}^{m+1/2}+\frac{(n_y)_{k}}{ Z_k}  \left((n_x)_{k}\tilde E_{y_k}^{m}-(n_y)_{k}\tilde E_{x_k}^{m}\right)\right)
\tilde E_{x_k}^{[m+1/2]}\nonumber\\
&&+ \left((n_x)_{k} \tilde H_{z_k}^{m+1/2}-\frac{(n_x)_{k}}{ Z_k}  \left((n_x)_{k}\tilde E_{y_k}^{m}-(n_y)_{k}\tilde E_{x_k}^{m}\right)\right)
\tilde E_{y_k}^{[m+1/2]}\nonumber\\
&&- \left(\frac{1}{Y_k}\tilde H_{z_k}^{m+1/2}+ (n_x)_{k}\tilde E_{y_k}^{m+1}-(n_y)_{k}\tilde E_{x_k}^{m+1}\right)
\tilde H_{z_k}^{[m+1]}\Bigg)\, ds.
 \end{eqnarray*}
 Summing from $m=0$ to $m=M-1$, and taking into account the previous cases, we deduce that
  \begin{eqnarray*}
  \sum_{m=0}^{M-1} B_2^m
&\leq& \sum_{f_{k} \in F^{ext} }\int_{f_{k}} \frac{1}{8 Z_k} \Bigg(  -\left((n_y)_{k}\tilde E_{x_k}^{0}-(n_x)_{k}\tilde E_{y_k}^{0} \right)^2 +\left((n_y)_{k}\tilde E_{x_k}^{M}-(n_x)_{k}\tilde E_{y_k}^{M} \right)^2\Bigg)\\
&&+\frac14 \Bigg( \tilde H_{z_k}^{1/2}   \left((n_x)_{k}\tilde E_{y_k}^{0}-(n_y)_{k}\tilde E_{x_k}^{0} - \frac{1}{2Y_k} \tilde H_{z_k}^{1/2}  \right)\\
&&-\tilde H_{z_k}^{M+1/2}\left((n_x)_{k}\tilde E_{y_k}^{M}-(n_y)_{k}\tilde E_{x_k}^{M} - \frac{1}{2Y_k} \tilde H_{z_k}^{M+1/2}\right)\Bigg)\, ds,
 \end{eqnarray*}
which concludes the proof. 
\end{proof}

\begin{theorem} \label{stab_theorem} Let us consider the leap-frog DG method (\ref{Fdis1})--(\ref{Fdis3}) complemented with the discrete boundary conditions defined in Section \ref{DG_method}. If the time step $\Delta t$ is such that
\begin{equation}\label{stab_cond}
\Delta t < \frac{\min\{\munderbar \epsilon, \munderbar\mu\}}{\max\{{C}_E, C_H\}} \min\{h_k\},
\end{equation}
where
$${C}_E=\frac12 C_{inv} N^2+C_{\tau}^2 (N+1)(N+2)\left(2+\beta_2+\frac{2\alpha+\beta_1}{2\min\{Z_k\}}\right),$$
$${C}_H= \frac12 C_{inv} N^2+C_{\tau}^2 (N+1)(N+2)\left(2+\beta_2+\frac{\alpha+\beta_2\beta_3}{\min\{ Y_k\}}\right), $$
 with $C_{\tau}$ defined by (\ref{inq_trace}) of Lemma \ref{trace}  and $C_{inv}$ defined by (\ref{inq_inv}) of Lemma \ref{inverse_ineq}, and  $\beta_1=\alpha, \beta_2=0$ for PEC, $\beta_1=0, \beta_2=1$, $\beta_3=\alpha$ for PMC,  and $\beta_1=\beta_2=\frac12$, $\beta_3=1$ for Silver-M\"uller boundary conditions, then the method is stable.
\end{theorem}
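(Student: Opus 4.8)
The plan is to manufacture a \emph{modified discrete energy} that is monotone non-increasing along the leap-frog iterations, and then to show, under the time-step restriction (\ref{stab_cond}), that this energy is equivalent to the natural $L^2$-energy
$$\mathcal{E}_0^M := \sum_{T_k \in \mathcal T_h}\Big[(\epsilon \tilde E_k^M, \tilde E_k^M)_{T_k} + (\mu \tilde H_{z_k}^{M+1/2}, \tilde H_{z_k}^{M+1/2})_{T_k}\Big].$$
First I would sum the assembled identity (\ref{cond}) over all $T_k$, replacing $\sum_k A_k^m$ by $B_1^m + B_2^m$, and invoke Lemma \ref{B13} and Lemma \ref{B4} to bound $\sum_{m=0}^{M-1}(B_1^m+B_2^m)$ from above. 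Crucially, the right-hand sides of those two lemmas involve only the field values at the end times $m=0$ and $m=M$, the intermediate sum-of-squares contributions having already been discarded with the favorable sign. Moving every term carrying a time-$M$ index to the left and every time-$0$ term to the right, I define
$$\hat{\mathcal E}^M := \mathcal{E}_0^M - \Delta t\sum_{T_k}(\nabla\times\tilde H_{z_k}^{M+1/2}, \tilde E_k^M)_{T_k} - 2\Delta t\,(\text{boundary terms at } M),$$
so that the collected inequality reads simply $\hat{\mathcal E}^M \le \hat{\mathcal E}^0$ for every $M$.

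It then remains to show that $\hat{\mathcal E}^M$ controls $\mathcal{E}_0^M$ from below (and is controlled by it from above at the initial time). The volume cross-term is handled with the inverse inequality (\ref{inq_inv}): since $\nabla\times\tilde H_z = \nabla\tilde H_z$ componentwise, $\|\nabla\times\tilde H_{z_k}^{M+1/2}\|_{T_k} \le C_{inv}N^2 h_k^{-1}\|\tilde H_{z_k}^{M+1/2}\|_{T_k}$, and Cauchy--Schwarz followed by Young's inequality yields the contribution $\tfrac12 C_{inv}N^2$ present in both $C_E$ and $C_H$. Each boundary term is reduced to a volume quantity by the trace inequality (\ref{inq_trace}), which supplies the factor $C_\tau^2(N+1)(N+2)h_k^{-1}$; the internal-edge jumps are split over the two adjacent elements using the impedance identities (\ref{ZY1})--(\ref{ZY2}), while the mixed $E$--$H$ boundary products are separated by Young's inequality, the weights $Z_k$ and $Y_k$ being distributed precisely so as to assemble $C_\tau^2(N+1)(N+2)\big(2+\beta_2+\tfrac{2\alpha+\beta_1}{2\min\{Z_k\}}\big)$ in $C_E$ and $C_\tau^2(N+1)(N+2)\big(2+\beta_2+\tfrac{\alpha+\beta_2\beta_3}{\min\{Y_k\}}\big)$ in $C_H$. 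After bounding $h_k \ge \min_k h_k$ and using positive-definiteness of $\epsilon$ and $\mu$, all correction terms at time $M$ are dominated by $\tfrac{\Delta t}{\min_k h_k}\big(C_E\|\tilde E^M\|^2 + C_H\|\tilde H^{M+1/2}\|^2\big) \le \theta\,\mathcal{E}_0^M$, where $\theta := \Delta t\,\max\{C_E,C_H\}/(\min\{\munderbar\epsilon,\munderbar\mu\}\min_k h_k)$.

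Under (\ref{stab_cond}) we have $\theta < 1$, whence $\hat{\mathcal E}^M \ge (1-\theta)\mathcal{E}_0^M > 0$ and, by the same estimates applied at $m=0$, $\hat{\mathcal E}^0 \le (1+\theta)\mathcal{E}_0^0$. Chaining these with the monotonicity $\hat{\mathcal E}^M \le \hat{\mathcal E}^0$ gives $\mathcal{E}_0^M \le \tfrac{1+\theta}{1-\theta}\mathcal{E}_0^0$ uniformly in $M$, which is the asserted $L^2$-stability. I expect the genuinely hard part to be the second paragraph's bookkeeping: tracking each internal-edge jump and each boundary product through the trace estimate with the correct $Z_k,Y_k$ weights and Young splitting, so that the accumulated constants collapse \emph{exactly} into the stated $C_E$ and $C_H$, and not merely into a constant of the same scaling. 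The monotonicity step and the final chaining are, by contrast, routine once $\hat{\mathcal E}^M$ has been correctly identified.
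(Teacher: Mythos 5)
Your proposal is correct and follows essentially the same route as the paper: both start from the telescoped identity (\ref{cond}), invoke Lemmas \ref{B13} and \ref{B4} to reduce the flux sums to end-time terms, and then absorb those correction terms into the $L^2$-energy via the trace inequality (\ref{inq_trace}) and the inverse inequality (\ref{inq_inv}) under the CFL restriction (\ref{stab_cond}). Your ``modified energy'' packaging of the argument, and the resulting bound $\mathcal{E}_0^M \le \frac{1+\theta}{1-\theta}\mathcal{E}_0^0$, is only a cosmetic reformulation of the paper's final two-sided inequality (the paper's right-hand constant $C_0$ is in fact dominated by your $\max\{C_E,C_H\}$, so your chaining is valid, merely slightly less sharp).
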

\begin{proof}
From (\ref{cond}) and the previous lemmata, considering  the Cauchy-Schwarz's inequality and taking into account that $Z_i/(Z_i+Z_k)<1$, we obtain
\begin{eqnarray*}
&&\sum_{T_k \in \mathcal T_h}\left( \left(\epsilon \tilde{E}_k^{M},\tilde{E}_k^{M}\right)_{T_k}+\left(\mu \tilde{H}_{z_k}^{M+1/2},\tilde{H}_{z_k}^{M+1/2}\right)_{T_k}\right)\\
&&\le\sum_{T_k \in \mathcal T_h}\left(\left(\epsilon \tilde{E}_k^{0},\tilde{E}_k^{0}\right)_{T_k}+\left(\mu \tilde{H}_{z_k}^{1/2},\tilde{H}_{z_k}^{1/2}\right)_{T_k}\right)\\
&&\qquad+\Delta t \sum_{T_k \in \mathcal T_h}\left(\|\nabla \times \tilde{H}_{z_k}^{1/2}\|_{L^2(T_k)}\|\tilde{E}_k^{0}\|_{L^2(T_k)} +\|\nabla \times \tilde{H}_{z_k}^{M+1/2}\|_{L^2(T_k)}\|\tilde{E}_k^{M}\|_{L^2(T_k)} \right)\\
&&\qquad+2\Delta t \sum_{f_{ik} \in F^{int} } \Big( \|
\tilde E^{M}_{k}\|_{L^2(f_{ik})}\|[\tilde H_{z_k}^{M+1/2}]\|_{L^2(f_{ik})}+\|\tilde E^{0}_{k}\|_{L^2(f_{ik})}\|[\tilde H_{z_k}^{1/2}]\|_{L^2(f_{ik})}\Big) \\
&&\qquad+\frac{\alpha \Delta t}{4\min\{Z_k\}}\sum_{f_{ik} \in F^{int} }  \| [\tilde E_{k}^{M}]\|^2_{L^2(f_{ik})} +\frac{\alpha\Delta t}{4 \min\{Y_k\}}\sum_{f_{ik} \in F^{int}}\| [\tilde H_{z_k}^{M+1/2}]\|_{L^2(f_{ik})}^2\\
&&\qquad+\frac{\beta_1\Delta t}{2 \min\{Z_k\}}\sum_{f_{k} \in F^{ext}}  \| \tilde E_k^M\|_{L^2(f_k)}^2 +\frac{\beta_2\beta_3\Delta t}{ \min\{Y_k\}}\sum_{f_{k} \in F^{ext}}\|\tilde H_{z_k}^{M+1/2}\|_{L^2(f_k)}^2   \\
&&\qquad+ 2\beta_2\Delta t\sum_{f_{k} \in F^{ext}}\left( \|\tilde H_{z_k}^{1/2}\|_{L^2(f_k)}\| \tilde E_{k}^{0}\|_{L^2(f_k)} + \|\tilde H_{z_k}^{M+1/2}\|_{L^2(f_k)} \| \tilde E_{k}^{M}\|_{L^2(f_k)}\right).
 \end{eqnarray*}
Using the inequality (\ref{inq_trace}) of Lemma \ref{trace} and the inequality  (\ref{inq_inv}) of Lemma \ref{inverse_ineq} (both in Appendix),  we get
\begin{eqnarray*}
&&\min\{\munderbar\epsilon,\munderbar\mu\}\left(\|\tilde{E}^{M}\|^2_{\Omega}+\|\tilde{H}_{z}^{M+1/2}\|^2_{\Omega}\right)\le\max\{\bar\epsilon, \bar\mu\}\left(\|\tilde{E}^{0}\|^2_{\Omega}+\|\tilde{H}_{z}^{1/2}\|^2_{\Omega}\right)\\
&&\qquad+\frac{\Delta t}{2} C_{inv} N^2 \max\left\{h_k^{-1} \right\} \left(\|\tilde{H}_{z}^{1/2}\|_{\Omega}^2 + \|\tilde{E}^{0}\|_{\Omega}^2 +\| \tilde{H}_{z}^{M+1/2}\|_{\Omega}^2+ \|\tilde{E}^{M}\|_{\Omega}^2 \right)\\
&&\qquad+C_{\tau}^2 (N+1)(N+2)\Delta t\max\left\{h_k^{-1} \right\}\left(2+\beta_2+\frac{2\alpha+\beta_1}{2\min\{Z_k\}}\right) \| \tilde E^M\|_{\Omega}^2\\
&&\qquad+C_{\tau}^2 (N+1)(N+2)\Delta t\max\left\{h_k^{-1} \right\}\left(2+\beta_2+\frac{\alpha+\beta_2\beta_3}{\min\{ Y_k\}}\right) \| \tilde H_{z}^{M+1/2}\|_{\Omega}^2\\
&&\qquad+C_{\tau}^2 (N+1)(N+2)\Delta t\max\left\{h_k^{-1} \right\}\left(2+\beta_2\right)\left(  \| \tilde E^0\|_{\Omega}^2+ \|\tilde H_{z}^{1/2}\|_{\Omega}^2 \right).
 \end{eqnarray*}
and so, taking ${C}_0=\frac12 C_{inv} N^2+ C_{\tau}^2 (N+1)(N+2)\left(2+ \beta_2\right),$
 \begin{eqnarray*}
&&\left(\min\{\munderbar \epsilon, \munderbar\mu\}-\Delta t \max\left\{h_k^{-1} \right\}\max\{{C}_E,  C_H\}\right)\left(\|\tilde{E}^{M}\|^2_{L^2(\Omega)}+\|\tilde{H}_{z}^{M+1/2}\|^2_{L^2(\Omega)}\right)\\
&&\qquad \le \left(\max\{\bar \epsilon, \bar\mu\}+\Delta t \max\left\{h_k^{-1} \right\}{C}_0\right)\left(\|\tilde{E}^{0}\|^2_{L^2(\Omega)}+\|\tilde{H}_{z}^{1/2}\|^2_{L^2(\Omega)}\right),
\end{eqnarray*}
which concludes the proof. 
\end{proof}

The stability condition (\ref{stab_cond}) shows that the method is conditionally stable, which is natural since we considered an explicit time discretization.
Furthermore, it discloses the influence of the values of $\alpha$, $h_{min}$ and $N$ on the bounds of the stable region. This is of utmost importance to balance accuracy {\it versus } stability.

%%%%%%%%%%%%%%%%%%%%%%%%%%%%%%%%%%%%%%%%%%%%%%%%%%%%%%%%%%%

  \section{Numerical results}

  In this section we present numerical results that support the  theoretical results derived in the previous section.

We can check numerically that  (\ref{stab_cond}) defines a sharp stability condition, in terms of the influence of $N$ and $h_{min}=\min\{h_k\}$.  In our experiments, we computed  $C$ that satisfies
\begin{equation}\label{C_stability}
\Delta t_{max} = \frac{C}{(N+1)(N+2)} h_{min},
\end{equation}
where
$\Delta t_{max}$ is the maximum observed value of $\Delta t$ such that the method is stable.
For these tests, the domain is the  square $\Omega = (-1,1)^2$,  the simulation final time is fixed at $T = 1$, we consider a  symmetric and positive definite anisotropic constant permittivity tensor (\ref{tensor}), with $\epsilon_{xx} = 5, \epsilon_{xy} = \epsilon_{yx} = 1 $ and $\epsilon_{yy} = 3 $, 
 and $\mu = 1$.
We consider equations (\ref{Maxwell TE1})--(\ref{Maxwell TE3}) with initial conditions $E_x(x,y,0) = 0,
E_y(x,y,0) = 0, H_z(x,y,\Delta t/2) = \cos(\pi x) \cos(\pi y)\cos(\omega \Delta t/2)$, where $  \omega = \pi \sqrt{\frac{1}{\epsilon_{xx}}+\frac{1}{\epsilon_{yy}}}$, in the case of PEC boundary conditions and \linebreak
$E_x(x,y,0) = 0,$
$E_y(x,y,0) = 0,$ $H_z(x,y,\Delta t/2) = \sin(\pi\Delta t/2)\sin(\pi x y) $ in the case of Silver-M\"{u}ller absorbing boundary conditions.

In Table \ref{CFL_PEC_CENT} and Table \ref{CFL_PEC_UPWIND} the results are computed for different mesh sizes, considering respectively central and upwind fluxes in the DG method,  for the case of PEC  boundary conditions, while in Table \ref{CFL_SM_CENT} and Table \ref{CFL_SM_UPWIND}, the results are computed for the case of Silver-M\"uller boundary conditions.

%%%%%%%%%%%%%%%
%%         CENTRAL FLUX_ PEC _ full tensor
%%%%%%%%%%%%%%%%%%%
{
\begin{table}[h!]
\footnotesize
  \begin{tabular}{lcccccccccc}
    \toprule
    \multirow{2}{*}{$h_{min}$} &
      \multicolumn{2}{c}{$N=1$} &
      \multicolumn{2}{c}{$N=2$} &
      \multicolumn{2}{c}{$N=3$}&
      \multicolumn{2}{c}{$N=4$}&
      \multicolumn{2}{c}{$N=5$}\\
      & {$\Delta t_{max}$} & {$C$} & {$\Delta t_{max}$}  & {$C$} & {$\Delta t_{max}$}  & {$C$} &  {$\Delta t_{max}$} & {$C$}&  {$\Delta t_{max}$} & {$C$} \\
      \midrule
    0.5657 & 0.17 & 1.80  & 0.1 & 2.12   & 0.065 & 2.30  & 0.044  & 2.33  & 0.032 & 2.37    \\
    0.2828 & 0.088 & 1.87  & 0.05 & 2.12   & 0.031 & 2.20   & 0.021 & 2.23   & 0.016  & 2.37   \\
    0.1414& 0.044 & 1.87  & 0.024 & 2.04  & 0.015 & 2.12   & 0.01 & 2.12   & 0.0078  & 2.32   \\
    0.0707& 0.021 & 1.78  & 0.012 & 2.04   & 0.0078  & 2.20 & 0.0054  & 2.30  & 0.0038  & 2.26 \\
    0.0354& 0.01 & 1.70  & 0.006  & 2.04  & 0.0039  & 2.20   & 0.0027  & 2.30   & 0.0019 & 2.26   \\
    0.0177& 0.0054 & 1.83 & 0.003 & 2.04   & 0.0019  & 2.15   & 0.0013 & 2.21  & 0.00095 & 2.26   \\
    \bottomrule
  \end{tabular}
  \caption{$\Delta t_{max}$ such that the method is stable and  $C$ computed by (\ref{C_stability}) for PEC boundary conditions and central flux. }
\label{CFL_PEC_CENT}
\end{table}
}
%%%%%%%%

%%%%%%%%
% UPWIND_ PEC_full tensor 
%%%%%%%%%
{
\begin{table}[h!]
\footnotesize
  \begin{tabular}{lcccccccccc}
    \toprule
    \multirow{2}{*}{$h_{min}$} &
      \multicolumn{2}{c}{$N=1$} &
      \multicolumn{2}{c}{$N=2$} &
      \multicolumn{2}{c}{$N=3$} &
            \multicolumn{2}{c}{$N=4$} &
      \multicolumn{2}{c}{$N=5$} \\
      & {$\Delta t_{max}$} & {$C$} & {$\Delta t_{max}$}  & {$C$} & {$\Delta t_{max}$}  & {$C$} & {$\Delta t_{max}$}  & {$C$} & {$\Delta t_{max}$}  & {$C$}\\
      \midrule
    0.5657 & 0.10 & 1.06 & 0.056  & 1.19  & 0.034  & 1.20  & 0.023 & 1.22  & 0.016  & 1.19    \\
    0.2828 & 0.047 & 1.00 & 0.026  & 1.10  & 0.016 & 1.13  & 0.011 & 1.17 & 0.0081  &  1.20  \\
    0.1414& 0.023 & 0.98  & 0.012 & 1.02  & 0.008  & 1.13   & 0.0054  & 1.15  & 0.0039  & 1.16  \\
    0.0707& 0.011 & 0.93  & 0.0062  & 1.05   & 0.0039  & 1.10  & 0.0026  & 1.10   & 0.0019  & 1.13   \\
    0.0354& 0.0055  & 0.93  & 0.003 & 1.02   & 0.0019 & 1.07  & 0.0013 & 1.10   & 0.0009  & 1.07   \\
    0.0177& 0.0027  & 0.92  & 0.0015  & 1.02  & 0.0009  & 1.02 & 0.0006  & 1.02  & 0.0004 & 0.95    \\
    \bottomrule
  \end{tabular}
  \caption{$\Delta t_{max}$ such that the method is stable and  $C$ computed by (\ref{C_stability}) for PEC boundary conditions and upwind flux.}
\label{CFL_PEC_UPWIND}
\end{table}
}
%

%
%%%%%%%%%%%%%%%
%%         CENTRAL FLUX_ SM _ full tensor
%%%%%%%%%%%%%%%%%%%
{
\begin{table}[h!]
\footnotesize
  \begin{tabular}{lcccccccccc}
    \toprule
    \multirow{2}{*}{$h_{min}$} &
      \multicolumn{2}{c}{$N=1$} &
      \multicolumn{2}{c}{$N=2$} &
      \multicolumn{2}{c}{$N=3$}&
      \multicolumn{2}{c}{$N=4$}&
      \multicolumn{2}{c}{$N=5$}\\
      & {$\Delta t_{max}$} & {$C$} & {$\Delta t_{max}$}  & {$C$} & {$\Delta t_{max}$}  & {$C$} &  {$\Delta t_{max}$} & {$C$}&  {$\Delta t_{max}$} & {$C$} \\
      \midrule
    0.5657 & 0.18 & 1.91  & 0.1 & 2.12   & 0.064 & 2.26   & 0.044  & 2.33  & 0.031 & 2.30    \\
    0.2828 & 0.092 & 1.95  & 0.05 & 2.12   & 0.031 & 2.19    & 0.021 & 2.02    & 0.015 & 2.23   \\
    0.1414& 0.044 & 1.87  & 0.024 & 2.04  & 0.015 & 2.12   & 0.01 & 2.12   & 0.0079  & 2.35   \\
    0.0707& 0.021 & 1.78  & 0.012 & 2.04   & 0.0077  & 2.18  & 0.0053  & 2.25  & 0.0038  & 2.26  \\
    0.0354& 0.01 & 1.70  & 0.006  & 2.04  & 0.0038  & 2.15    & 0.0026  & 2.21   & 0.0019  & 2.26    \\
    0.0177& 0.0053 & 1.80 & 0.003 & 2.04   & 0.0018  & 2.04   & 0.0012 & 2.04   & 0.00095 & 2.26   \\
    \bottomrule
  \end{tabular}
  \caption{$\Delta t_{max}$ such that the method is stable and  $C$ computed by (\ref{C_stability}) for SM boundary conditions and central flux. }
\label{CFL_SM_CENT}
\end{table}
}
%%%%%%%%
% UPWIND_ SM _full tensor 
%%%%%%%%%
{
\begin{table}[h!]
\footnotesize
  \begin{tabular}{lcccccccccc}
    \toprule
    \multirow{2}{*}{$h_{min}$} &
      \multicolumn{2}{c}{$N=1$} &
      \multicolumn{2}{c}{$N=2$} &
      \multicolumn{2}{c}{$N=3$} &
            \multicolumn{2}{c}{$N=4$} &
      \multicolumn{2}{c}{$N=5$} \\
      & {$\Delta t_{max}$} & {$C$} & {$\Delta t_{max}$}  & {$C$} & {$\Delta t_{max}$}  & {$C$} & {$\Delta t_{max}$}  & {$C$} & {$\Delta t_{max}$}  & {$C$}\\
      \midrule
    0.5657 & 0.11 & 1.17 & 0.057    & 1.21   & 0.035   & 1.24  & 0.023 & 1.22  & 0.016  & 1.19    \\
    0.2828 & 0.051 & 1.08  & 0.026  & 1.10  & 0.016 & 1.13  & 0.011 & 1.17 & 0.008  &  1.19  \\
    0.1414& 0.023 & 0.98  & 0.012 & 1.02  & 0.008  & 1.13   & 0.0054  & 1.15  & 0.0039  & 1.16  \\
    0.0707& 0.011 & 0.93  & 0.0061  & 1.04   & 0.0039  & 1.10  & 0.0026  & 1.10   & 0.0019  & 1.13   \\
    0.0354& 0.0055  & 0.93  & 0.003 & 1.02   & 0.0018 & 1.07  & 0.0013 & 1.10   & 0.00097  & 1.15    \\
    0.0177& 0.0027  & 0.92  & 0.0015  & 1.02  & 0.00097  & 1.10  & 0.00065  & 1.10   & 0.00045 & 1.07    \\
    \bottomrule
  \end{tabular}
  \caption{$\Delta t_{max}$ such that the method is stable and  $C$ computed by (\ref{C_stability}) for SM boundary conditions and upwind flux.}
\label{CFL_SM_UPWIND}
\end{table}
}

As expected from the condition (\ref{stab_cond}), the numerical examples show that the stability regions corresponding to
central fluxes are slightly bigger when compared to the regions obtained using upwind fluxes.
From all the examples presented, we may deduce  that the right hand side of (\ref{stab_cond}) is a sharp bound for $\Delta t_{max}$.
Moreover, we can also conclude that $\Delta t_{max}$ is directly  proportional $h_{min}$ and inversely proportional to $(N+1)(N+2)$.

%%%%%%%%%%%%%%%%%%%%%%%%%%%%%%%%%%%%%%%%%%%%%%%%%

\section{Stability of the 3 D model}

In this section we extend the analysis in Section \ref{section_stability_2D} of the TE form of Maxwell's equations in two-dimensions to the full three-dimensional time-dependent Maxwell equations (\ref{maxwell3D}), with the equations are set  on a bounded polyhedral domain $\Omega \subset \bkR^3$.
We can write the model in a conservation form (\ref{q_eq}) 
with
  \begin{eqnarray*}
Q=\begin{pmatrix} \epsilon & 0 \\ 0 & \mu \end{pmatrix}, \quad
 q=\begin{pmatrix}E\\ H\end{pmatrix} \quad \mbox{and } \quad
F(q)= \begin{pmatrix} n \times H\\ - n\times E\end{pmatrix},
\end{eqnarray*}
where $E=(E_x,E_y,E_z),$ $H=(H_x,H_y,H_z)$ and these are functions of $(x,y,z,t)$.

We assume that electric permittivity and the magnetic permeability tensors $\epsilon$ and $\mu$ are symmetric and uniformly positive definite for almost every $(x,y,z) \in \Omega$, and  are uniformly bounded with a strictly positive lower bound, {\it i.e.}, there are constants  $\underline\epsilon>0$,   $\overline \epsilon> 0$ and $\underline\mu>0$, $\overline \mu> 0$ such that, for almost every $(x,y,z)\in \Omega$,
$$\munderbar\epsilon |\xi|^2\le\xi^T\epsilon(x,y,z)\xi\le \overline\epsilon |\xi|^2,\qquad \munderbar\mu |\xi|^2\le\xi^T\mu(x,y,z)\xi\le \overline\mu |\xi|^2,\qquad \forall \xi\in\mathbb{R}^3.$$
Let us define an effective permeability (in the same way as the effective permittivity) by
$$\mu_{eff}=\frac{\det(\mu)}{n^T \mu n}.$$
Now the the speed with which a wave travels along the direction of the unit normal is given by
$$c =\sqrt{\frac{1}{\mu_{eff} \epsilon_{eff}}}.$$

We assume that $\Omega$ is partitioned into $K$ disjoint tetrahedral elements $T_k$. The leap-frog discontinuous Galerkin method is the natural extension of the formulation (\ref{Fdis1})-(\ref{Fdis3}) to the three-dimensional domain,  
with the numerical flux defined by
\begin{eqnarray*}
n \cdot \left(F(\tilde q)-F^*(\tilde q)\right)
&=&\begin{pmatrix}
 \frac{-1}{Z^++Z^-} n \times \left(Z^+ [\tilde H]-\alpha n \times [\tilde E] \right)\\
 \frac{1}{Y^++Y^-} n \times \left(Y^+ [\tilde E]+\alpha n \times [\tilde H]\right)
 \end{pmatrix}.
\end{eqnarray*}

We start by noticing that the following inequalities hold
\begin{eqnarray}\label{fv13D}
&&\left(\epsilon \tilde{E}_k^{m+1},\tilde{E}_k^{m+1}\right)_{T_k}-\left(\epsilon \tilde{E}_k^{m},\tilde{E}_k^{m}\right)_{T_k}
=2 \Delta t \left(\nabla \times \tilde{H}_{k}^{m+1/2},\tilde{E}_k^{[m+1/2]}\right)_{T_k}\nonumber\\
&&\qquad- 2\Delta t\left( \frac{1}{Z^++Z^-} n \times \left(Z^+ [\tilde H^{m+1/2}]-\alpha n \times [\tilde E^{m}]  \right), \tilde E_{k}^{[m+1/2]}\right)_{\partial T_k}
\end{eqnarray}
and
\begin{eqnarray} \label{fv23D}
\lefteqn{ \left(\mu \tilde{H}_{k}^{m+3/2},\tilde{H}_{k}^{m+3/2}\right)_{T_k}-\left(\mu \tilde{H}_{k}^{m+1/2},\tilde{H}_{k}^{m+1/2}\right)_{T_k}
= }\nonumber\\
&&-2\Delta t \left( \nabla \times \tilde E_{k}^{m+1} ,\tilde{H}_{k}^{[m+1]}\right)_{T_k}\nonumber\\
&&+ 2\Delta t \left( \frac{1}{Y^++Y^-} n \times \left(Y^+ [\tilde E^{m+1}]+\alpha n \times [\tilde H^{m+1/2}]\right),
\tilde H_{k}^{[m+1]}\right)_{\partial T_k}.  \nonumber \\\end{eqnarray}

Using the identity,
$$
\left( \nabla \times \tilde E_{k}^{m+1} ,\tilde{H}_{k}^{[m+1]}\right)_{T_k}= \left(\nabla \times \tilde{H}_{k}^{[m+1]},\tilde{E}_k^{m+1}\right)_{T_k}+ \left(    n \times \tilde{E}_{k}^{m+1} , \tilde{H}_{k}^{[m+1]} \right)_{\partial T_k},
$$
summing (\ref{fv13D}) and  (\ref{fv23D})  from $m=0$ to $m=M-1$, and integrating by parts, we get
\begin{eqnarray}
&&\left(\epsilon \tilde{E}_k^{M},\tilde{E}_k^{M}\right)_{T_k}+\left(\mu \tilde{H}_{k}^{M+1/2},\tilde{H}_{k}^{M+1/2}\right)_{T_k}=
\left(\epsilon \tilde{E}_k^{0},\tilde{E}_k^{0}\right)_{T_k}+\left(\mu \tilde{H}_{k}^{1/2},\tilde{H}_{z_k}^{1/2}\right)_{T_k}\nonumber\\
&&\qquad +\Delta t \left(\nabla \times \tilde{H}_{k}^{1/2},\tilde{E}_k^{0}\right)_{T_k}
- \Delta t \left(\nabla \times \tilde{H}_{k}^{M+1/2},\tilde{E}_k^{M}\right)_{T_k}
+2 \Delta t \sum_{m=0}^{M-1}   A_k^m,\label{cond3D}
 \end{eqnarray}
where
\begin{eqnarray*}
A_k^m &=&- \left( \frac{1}{Z^++Z^-} n \times \left(Z^+ [\tilde H^{m+1/2}]-\alpha n \times [\tilde E^{m}]  \right), \tilde E_{k}^{[m+1/2]}\right)_{\partial T_k}\nonumber\\
&&\left( \frac{1}{Y^++Y^-} n \times \left(Y^+ [\tilde E^{m+1}]+\alpha n \times [\tilde H^{m+1/2}]\right),
\tilde H_{k}^{[m+1]}\right)_{\partial T_k}\\
&&-\left(    n \times \tilde{E}_{k}^{m+1} , \tilde{H}_{k}^{[m+1]} \right)_{\partial T_k}.
 \end{eqnarray*}
 
 Let us consider the following decomposition
 $$\sum_{T_k\in \mathcal T_h}A_k^m = B_1^m + B_2^m,$$
 where 
    \begin{eqnarray}
B_1^m &  =&\sum_{f_{ik} \in F^{int} } \int_{f_{ik}} \Bigg(  \frac{-1}{Z_i+Z_k} n_{ki}\times \left(Z_i [\tilde H_{k}^{m+1/2}]-\alpha n_{ki}\times [\tilde E_{k}^{m}]  \right)\cdot
\tilde E_{k}^{[m+1/2]}\nonumber\\
&&\quad  + \frac{1}{Z_i+Z_k}n_{ki} \times \left(Z_k [\tilde H_{i}^{m+1/2}]+\alpha n_{ki}\times [\tilde E_{i}^{m}]\right)
\cdot \tilde E_{i}^{[m+1/2]}\nonumber\\
&&\quad  + \frac{1}{Y_i+Y_k} n_{ki}\times \left( Y_i [\tilde E_{k}^{m+1}]+\alpha n_{ki}\times [\tilde H_{k}^{m+1/2}]\right)\cdot
\tilde H_{k}^{[m+1]}\nonumber\\
&&\quad  - \frac{1}{Y_i+Y_k} n_{ki}\times \left( Y_k [\tilde E_{i}^{m+1}]-\alpha n_{ki}\times [\tilde H_{i}^{m+1/2}]\right)\cdot
\tilde H_{i}^{[m+1]}\nonumber\\
&&\quad  - n_{ki}\times \tilde{E}_{k}^{m+1} \cdot \tilde{H}_{k}^{[m+1]}
+ n_{ki}\times \tilde{E}_{i}^{m+1} \cdot \tilde{H}_{i}^{[m+1]} \Bigg)\, ds,\label{B1m3D}
 \end{eqnarray}
 and
 \begin{eqnarray}
B_2^m&=&\sum_{f_{k} \in F^{ext} } \int_{f_{k}} \Bigg(  - \frac{1}{2 Z_k} n_k \times \left(Z_k [\tilde H_k^{m+1/2}]-\alpha n_k \times [\tilde E_k^{m}]  \right) \cdot \tilde E_{k}^{[m+1/2]}\nonumber\\
&& + \frac{1}{2Y_k} n_k \times \left(Y_k[\tilde E_k^{m+1}]+\alpha n_k \times [\tilde H_k^{m+1/2}]\right) \cdot
\tilde H_{k}^{[m+1]}-   n_k \times \tilde{E}_{k}^{m+1} \cdot \tilde{H}_{k}^{[m+1]}\Bigg)  \, ds .\label{B2m3D}
 \end{eqnarray}

 We will now estimate $B_1^m$ and $B_2^m$. 
 In what follows, we use the inequalities:
 \begin{equation}\label{cross_dot}
  u\times v \cdot w = - u\times w\cdot v,
  \end{equation}
  and
     \begin{equation}\label{cross_cross}
  u\times( v \times w) = v  (u\cdot w)-w (u\cdot v).
  \end{equation}
  
 \begin{lemma}\label{B13D}
 Let $B_1^m$ be defined by (\ref{B1m3D}).%
 Then
     \begin{eqnarray}
 \sum_{m=0}^{M-1}B_1^m 
&  \le& \sum_{f_{ik} \in F^{int} } \int_{f_{ik}} \Bigg(\frac{\alpha}{4(Z_i+Z_k)} [\tilde E_{k}^{M}] \cdot    [\tilde E_{k}^{M}]+ \frac{\alpha}{4(Y_i+Y_k)}[\tilde H_{k}^{M+1/2}] \cdot    [\tilde H_{k}^{M+1/2}]\nonumber\\
&&+\frac{Z_k}{2(Z_i+Z_k)} \left(n_{ki}\times [\tilde H_k^{1/2}]\cdot [E_k^0] - n_{ki}\times [\tilde H_k^{M+1/2}]\cdot [\tilde E_k^{M}]\right)\nonumber\\
&&+\frac12\left(n_{ki}\times [\tilde H_k^{M+1/2}]\cdot \tilde E_k^{M}  - n_{ki}\times [\tilde H_k^{1/2}]\cdot \tilde E_k^{0}\right)   \Bigg)\, ds.\nonumber
\end{eqnarray}
\end{lemma}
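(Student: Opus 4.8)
The plan is to mirror the two-dimensional argument of Lemma~\ref{B13}, replacing the scalar component manipulations by the vector identities (\ref{cross_dot}) and (\ref{cross_cross}). First I would split the face integrand of $B_1^m$ into a \emph{central} part, consisting of the terms without $\alpha$ (those carrying $Z_i[\tilde H]$, $Y_i[\tilde E]$ together with the integration-by-parts surface terms $n_{ki}\times\tilde E^{m+1}\cdot\tilde H^{[m+1]}$), and a \emph{dissipative} part, consisting of the terms carrying $\alpha$. On each internal face $f_{ik}$ the contributions of the adjacent elements $T_k$ and $T_i$ must be handled together, using the impedance/conductance relations (\ref{ZY1})--(\ref{ZY2}) to recombine the coefficients and the antisymmetry (\ref{cross_dot}) to move the normal across the dot products.

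For the central part I would use (\ref{cross_dot}) to rewrite each $n_{ki}\times[\tilde E^{m+1}]\cdot\tilde H^{[m+1]}$ so that it merges with the bare surface term $n_{ki}\times\tilde E^{m+1}\cdot\tilde H^{[m+1]}$, and likewise combine the $Z_i[\tilde H]$ couplings from the $E$-equations; the relations (\ref{ZY2}) identify the surviving coefficient as $Z_k/(Z_i+Z_k)=Y_i/(Y_i+Y_k)$. Applying the summation-by-parts identity $\sum_m(a^{m+1}+a^m)b^{m+1}=\tfrac12(-a^0b^0+a^Mb^M+\sum_m(\cdots))$ used in the proof of Lemma~\ref{B13}, the products of consecutive integer and half-integer time levels telescope, so that only the boundary-in-time couplings at $m=0$ and $m=M$ remain. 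These are precisely the terms $\tfrac{Z_k}{2(Z_i+Z_k)}\left(n_{ki}\times[\tilde H_k^{1/2}]\cdot[\tilde E_k^0]-n_{ki}\times[\tilde H_k^{M+1/2}]\cdot[\tilde E_k^M]\right)$ and $\tfrac12\left(n_{ki}\times[\tilde H_k^{M+1/2}]\cdot\tilde E_k^M-n_{ki}\times[\tilde H_k^{1/2}]\cdot\tilde E_k^0\right)$ appearing in the claimed bound, the energy-conserving character of the central flux being what forces the interior-time contributions to cancel.

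For the dissipative part I would expand every double cross product via (\ref{cross_cross}), using $|n_{ki}|=1$, so that $n_{ki}\times(n_{ki}\times[\tilde E])\cdot\tilde E^{[m+1/2]}=(n_{ki}\cdot[\tilde E])(n_{ki}\cdot\tilde E^{[m+1/2]})-[\tilde E]\cdot\tilde E^{[m+1/2]}$, and similarly for the $[\tilde H]$ terms. Combining the $k$- and $i$-contributions of each face then produces the jump-squared forms $[\tilde E]\cdot[\tilde E]$ and $[\tilde H]\cdot[\tilde H]$. Here I would apply the other summation identity $\sum_m(a^{m+1}+a^m)a^{m+1}=\tfrac12\left(-(a^0)^2+(a^M)^2+\sum_m(a^m+a^{m+1})^2\right)$: the $\sum_m(a^m+a^{m+1})^2\ge0$ pieces, together with the non-positive initial-time squares, are discarded through the inequality $\le$, leaving exactly the positive final-time dissipation $\tfrac{\alpha}{4(Z_i+Z_k)}[\tilde E_k^M]\cdot[\tilde E_k^M]$ and $\tfrac{\alpha}{4(Y_i+Y_k)}[\tilde H_k^{M+1/2}]\cdot[\tilde H_k^{M+1/2}]$.

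The main obstacle will be the interface bookkeeping. Each internal face $f_{ik}$ is fed from both adjacent elements with opposite normal orientation ($n_{ik}=-n_{ki}$) and opposite jump sign, and one must verify that the central couplings telescope to the clean time-boundary expression while the mixed $[\tilde H]$-versus-averaged-field terms recombine correctly after applying (\ref{cross_dot}). Keeping track of these vector cross-product terms, rather than the scalar products of the two-dimensional proof, and confirming that every interior-time contribution is either cancelled by the central/dissipative interplay or is sign-definite, is the delicate step.
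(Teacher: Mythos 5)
Your plan reproduces the paper's own argument: the same split into $\alpha$-dissipative and central parts, the same use of (\ref{ZY1})--(\ref{ZY2}), (\ref{cross_dot}), (\ref{cross_cross}) and the two summation-by-parts identities from Lemma \ref{B13}, with the interior-time sums of squares and the initial-time squares discarded to leave only the stated time-boundary terms. The only point you pass over lightly is that the double cross product yields the quadratic form of the projector $I-n_{ki}n_{ki}^T$ (i.e.\ $|[\tilde E]|^2-(n_{ki}\cdot[\tilde E])^2$) rather than $[\tilde E]\cdot[\tilde E]$ itself, so the final bound needs the trivial extra step $x^T(I-n_{ki}n_{ki}^T)x\le x\cdot x$, which the paper states explicitly.
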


\begin{proof}
Summing from $m=0$ to $m=M-1$, usin   (\ref{ZY1}),  (\ref{ZY2}), (\ref{cross_dot}) and (\ref{cross_cross}), we obtain
    \begin{eqnarray}
 \sum_{m=0}^{M-1}B_1^m   &=& \sum_{f_{ik} \in F^{int} } \int_{f_{ik}} \Bigg(\frac{\alpha}{4(Z_i+Z_k)}\Bigg( - [\tilde E_{k}^{0}]^T  \left(I-n_{ki} n_{ik}^T\right)  [\tilde E_{k}^{0}]  + [\tilde E_{k}^{M}]^T  \left(I-n_{ki} n_{ik}^T\right)   [\tilde E_{k}^{M}] \nonumber\\
&&  - \sum_{m=0}^{M-1} \left(  [\tilde E_{k}^{m}]+  [\tilde E_{k}^{m+1}]\right) ^T \left(I-n_{ki} n_{ik}^T\right)  \left(  [\tilde E_{k}^{m}]+  [\tilde E_{k}^{m+1}]\right)  \Bigg)\nonumber\\
&&+ \frac{\alpha}{4(Y_i+Y_k)}\Bigg( - [\tilde H_{k}^{1/2}]^T  \left(I-n_{ki} n_{ik}^T\right)  [\tilde H_{k}^{1/2}]  + [\tilde H_{k}^{M+1/2}]^T  \left(I-n_{ki} n_{ik}^T\right)   [\tilde H_{k}^{M+1/2}] \nonumber\\
&& - \sum_{m=0}^{M-1} \left(  [\tilde H_{k}^{m+1/2}]+  [\tilde H_{k}^{m+3/2}]\right) ^T \left(I-n_{ki} n_{ik}^T\right)  \left(  [\tilde H_{k}^{m+1/2}]+  [\tilde H_{k}^{m+3/2}]\right)  \Bigg)\nonumber\\
&& +\frac{Z_k}{2(Z_i+Z_k)} \left(n_{ki}\times [\tilde H_k^{1/2}]\cdot [E_k^0] - n_{ki}\times [\tilde H_k^{M+1/2}]\cdot [\tilde E_k^{M}]\right)\nonumber\\
&&+\frac12\left(n_{ki}\times [\tilde H_k^{M+1/2}]\cdot \tilde E_k^{M}  - n_{ki}\times [\tilde H_k^{1/2}]\cdot \tilde E_k^{0}\right)   \Bigg)\, ds\nonumber
\end{eqnarray}
where $I$ is an identity matrix. Since $I-n_{ki} n_{ik}^T$ is a positive semidefinite matrix,
    \begin{eqnarray}
\sum_{m=0}^{M-1}B_1^m   &\le& \sum_{f_{ik} \in F^{int} } \int_{f_{ik}} \Bigg(\frac{\alpha}{4(Z_i+Z_k)}\Bigg( [\tilde E_{k}^{M}]^T  \left(I-n_{ki} n_{ik}^T\right)   [\tilde E_{k}^{M}]\nonumber\\
&&\quad + \frac{\alpha}{4(Y_i+Y_k)}\Bigg([\tilde H_{k}^{M+1/2}]^T  \left(I-n_{ki} n_{ik}^T\right)   [\tilde H_{k}^{M+1/2}]\nonumber\\
&&\quad+\frac{Z_k}{2(Z_i+Z_k)} \left(n_{ki}\times [\tilde H_k^{1/2}]\cdot [E_k^0] - n_{ki}\times [\tilde H_k^{M+1/2}]\cdot [\tilde E_k^{M}]\right)\nonumber\\
&&\quad +\frac12\left(n_{ki}\times [\tilde H_k^{M+1/2}]\cdot \tilde E_k^{M}  - n_{ki}\times [\tilde H_k^{1/2}]\cdot \tilde E_k^{0}\right)   \Bigg)\, ds.\nonumber
\end{eqnarray}
The proof follows from the fact that, since the matriz $I-n_{ki} n_{ik}^T$ is an orthogonal projector, $x^T (I-n_{ki} n_{ik}^T)x\le x\cdot x$, for all vector $x$. \end{proof}

 \begin{lemma}\label{B23D}
 Let $B_2^m$ be defined by (\ref{B2m3D}).%
 Then
  \begin{eqnarray*}
 \sum_{m=0}^{M-1} B_2^m 
&\leq &\sum_{f_{k} \in F^{ext} } \int_{f_{k}}\Bigg(\frac{\beta_1}{4 Z_k}\Bigg( -\left(n_{k} \times  \tilde E_{k}^{0} \right) \cdot \left(n_{k} \times  \tilde  E_{k}^{0} \right)+ \left(n_{k} \times  \tilde  E_{k}^{M} \right) \cdot \left(n_{k} \times  \tilde  E_{k}^{M} \right)\Bigg)  \nonumber\\
&&+ \frac{\beta_3}{4 Y_k}\Bigg( -\left(n_{k} \times \tilde  H_{k}^{1/2} \right) \cdot \left(n_{k} \times \tilde   H_{k}^{1/2} \right)+ \left(n_{k} \times  \tilde  H_{k}^{M+1/2} \right) \nonumber\\
&& \cdot \left(n_{k} \times \tilde  H_{k}^{M+1/2} \right)\Bigg) 
+ \frac{\beta_2}{2} \left(n_k \times \tilde{E}_{k}^{0} \cdot \tilde{H}_{k}^{1/2}-  n_k \times \tilde{E}_{k}^{M} \cdot \tilde{H}_{k}^{M+1/2} \right)\Bigg)
\, ds,
 \end{eqnarray*}
 where $\beta_1=\alpha, \beta_2=0$ for PEC, $\beta_1=0, \beta_2=1$, $\beta_3=\alpha$ for PMC,  and $\beta_1=\beta_2=\beta_3=\frac12$    for Silver-M\"uller boundary conditions.
\end{lemma}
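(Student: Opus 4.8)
The plan is to follow the architecture of the proof of Lemma \ref{B4}, treating PEC, PMC and Silver-M\"uller separately, but replacing every scalar manipulation of the two-dimensional argument by its vector analogue through the identities (\ref{cross_dot}) and (\ref{cross_cross}). The starting point in each case is to insert into (\ref{B2m3D}) the discrete boundary conditions that extend those of Section \ref{DG_method} to three dimensions: $[\tilde E^{m}]=2\tilde E_k^{m}$, $[\tilde H^{m+1/2}]=0$ for PEC; $[\tilde E^{m}]=0$, $[\tilde H^{m+1/2}]=2\tilde H_k^{m+1/2}$ for PMC; and $\alpha=1$ together with $[\tilde E^{m}]=\tilde E_k^{m}$, $[\tilde H^{m+1/2}]=\tilde H_k^{m+1/2}$ for Silver-M\"uller.

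First I would simplify $B_2^m$ pointwise. The double cross products are rewritten with (\ref{cross_cross}), using $n_k\times(n_k\times v)=n_k(n_k\cdot v)-v=-(I-n_kn_k^T)v$, while the normal is moved across dot products with (\ref{cross_dot}). The essential structural fact is that $I-n_kn_k^T$ is a symmetric positive semidefinite orthogonal projector and that $(n_k\times v)\cdot(n_k\times v)=v^T(I-n_kn_k^T)v$; this is what converts the tangential cross products into the nonnegative quadratic forms of the statement. In the PEC case the two terms carrying $\tilde H_k^{[m+1]}$ cancel identically, leaving only $-(\alpha/Z_k)\,(I-n_kn_k^T)\tilde E_k^{m}\cdot\tilde E_k^{[m+1/2]}$; in the PMC case the pure-$E$ term drops out, leaving an $H$-projector term and two mixed $E$--$H$ terms; the Silver-M\"uller case carries all of them at once.

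The summation over $m$ then splits into two mechanisms, exactly as in Lemma \ref{B13}. The diagonal quadratic contributions are handled by the elementary identity $\sum_{m=0}^{M-1} a^m(a^m+a^{m+1})=\tfrac12\sum_{m=0}^{M-1}(a^m+a^{m+1})^2+\tfrac12\big((a^0)^2-(a^M)^2\big)$, applied to the fields weighted by the fixed per-face matrix $I-n_kn_k^T$; since that matrix is positive semidefinite, the middle sums $\sum_m(a^m+a^{m+1})^T(I-n_kn_k^T)(a^m+a^{m+1})\ge0$ are discarded to pass from equality to the stated inequality. The mixed $E$--$H$ contributions, by contrast, telescope directly: the half-step staggering of $\tilde E$ and $\tilde H$ makes them collapse to their temporal-endpoint values $n_k\times\tilde E_k^{0}\cdot\tilde H_k^{1/2}-n_k\times\tilde E_k^{M}\cdot\tilde H_k^{M+1/2}$ with no remainder. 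Reading off the surviving coefficients yields $\beta_1=\alpha,\ \beta_2=0$ for PEC, $\beta_2=1,\ \beta_3=\alpha$ (with $\beta_1=0$) for PMC, and $\beta_1=\beta_2=\beta_3=\tfrac12$ for Silver-M\"uller.

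I expect the main obstacle to be the Silver-M\"uller case, where neither jump vanishes and $\alpha=1$, so the $E$--$E$, $H$--$H$ and mixed $E$--$H$ contributions all appear simultaneously and must be separated cleanly. The delicate bookkeeping is to keep the signs produced by (\ref{cross_dot}) consistent, to verify that the several mixed terms recombine into the single $\beta_2$ bracket while the diagonal remainders assemble into discardable positive-semidefinite forms, and to confirm that the factors of $\tfrac12$ coming from $\tilde E_k^{[m+1/2]}$ and $\tilde H_k^{[m+1]}$ reproduce exactly the coefficients $\tfrac{\beta_1}{4Z_k}$, $\tfrac{\beta_3}{4Y_k}$ and $\tfrac{\beta_2}{2}$. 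Everything else is the routine vector transcription of the two-dimensional computation.
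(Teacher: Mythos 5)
Your proposal is correct and follows essentially the same route as the paper: a case-by-case treatment of PEC, PMC and Silver-M\"uller, substitution of the discrete boundary jumps into (\ref{B2m3D}), use of (\ref{cross_dot}) and (\ref{cross_cross}) to turn the double cross products into the positive semidefinite quadratic forms $v^T(I-n_kn_k^T)v=(n_k\times v)\cdot(n_k\times v)$, the summation identity to isolate endpoint terms while discarding the nonnegative middle sum, and exact telescoping of the staggered mixed $E$--$H$ terms. The details you flag as delicate (cancellation of the $\tilde H_k^{[m+1]}$ terms in the PEC case, the sign bookkeeping from (\ref{cross_dot}), and the factors of $\tfrac12$ producing $\tfrac{\beta_1}{4Z_k}$, $\tfrac{\beta_3}{4Y_k}$, $\tfrac{\beta_2}{2}$) all work out exactly as you describe.
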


\begin{proof} First we consider PEC boundary
conditions. We have
   \begin{eqnarray*}
B_2^m
= \sum_{f_{k} \in F^{ext} } \int_{f_{k}}   \frac{1}{ Z_k} n_k \times \left(\alpha n_k \times \tilde E_k^{m}  \right) \cdot \tilde E_{k}^{[m+1/2]} \, ds,
 \end{eqnarray*}
  and then
      \begin{eqnarray*}
 \sum_{m=0}^{M-1} B_2^m
&\leq & \sum_{f_{k} \in F^{ext} } \int_{f_{k}}\frac{\alpha}{4 Z_k}\Bigg( -\left(n_{k} \times  \tilde E_{k}^{0} \right) \cdot \left(n_{k} \times  \tilde  E_{k}^{0} \right)+ \left(n_{k} \times  \tilde  E_{k}^{M} \right) \cdot \left(n_{k} \times  \tilde  E_{k}^{M} \right)\Bigg) \, ds.
 \end{eqnarray*}

For PMC  boundary
conditions, results
\begin{eqnarray*}
B_2^m&=&\sum_{f_{k} \in F^{ext} } \int_{f_{k}}  \Bigg(-  n_k \times \tilde H_k^{m+1/2} \cdot \tilde E_{k}^{[m+1/2]}+ \frac{\alpha}{Y_k} n_k \times \left(n_k \times \tilde H_k^{m+1/2} \right) \cdot
\tilde H_{k}^{[m+1]}\nonumber\\
&&\quad-   n_k \times \tilde{E}_{k}^{m+1} \cdot \tilde{H}_{k}^{[m+1]}\Bigg)  \, ds,
 \end{eqnarray*}
  and then
  \begin{eqnarray*}
 \sum_{m=0}^{M-1} B_2^m
&\leq &\sum_{f_{k} \in F^{ext} } \int_{f_{k}}  \Bigg(\frac{\alpha}{4 Y_k}\Bigg( -\left(n_{k} \times \tilde  H_{k}^{1/2} \right) \cdot \left(n_{k} \times \tilde   H_{k}^{1/2} \right)+ \left(n_{k} \times  \tilde  H_{k}^{M+1/2} \right) \nonumber\\
&& \cdot \left(n_{k} \times \tilde  H_{k}^{M+1/2} \right)\Bigg) 
+ \frac{1}{2} n_k \times \tilde{E}_{k}^{0} \cdot \tilde{H}_{k}^{1/2}-  \frac{1}{2} n_k \times \tilde{E}_{k}^{M} \cdot \tilde{H}_{k}^{M+1/2}\Bigg)
\, ds.
 \end{eqnarray*}
 
For Silver-M\"uler boundary conditions, we have
\begin{eqnarray*}
 B_2^m&=
&\sum_{f_{k} \in F^{ext} } \int_{f_{k}} \Bigg(  - \frac{1}{2 Z_k} n_k \times \left(Z_k \tilde H_k^{m+1/2}- n_k \times \tilde E_k^{m}\right) \cdot \tilde E_{k}^{[m+1/2]} \nonumber\\
&&+ \frac{1}{2Y_k} n_k \times \left(Y_k \tilde E_k^{m+1}+ n_k \times \tilde H_k^{m+1/2} \right) \cdot
\tilde H_{k}^{[m+1]}-   n_k \times \tilde{E}_{k}^{m+1} \cdot \tilde{H}_{k}^{[m+1]}\Bigg)  \, ds
\nonumber\\
&=&\sum_{f_{k} \in F^{ext} } \int_{f_{k}}\Bigg(  - \frac{1}{2 Z_k} n_k \times \left(Z_k \tilde H_k^{m+1/2}- n_k \times \tilde E_k^{m}\right) \cdot \tilde E_{k}^{[m+1/2]} \nonumber\\
&&+ \frac{1}{2Y_k} n_k \times \left( n_k \times \tilde H_k^{m+1/2} \right) \cdot
\tilde H_{k}^{[m+1]}.-  \frac{1}{2} n_k \times \tilde{E}_{k}^{m+1} \cdot \tilde{H}_{k}^{[m+1]}\Bigg) \, ds,
 \end{eqnarray*}
 and then
  \begin{eqnarray*}
 \sum_{m=0}^{M-1} B_2^m
&\leq& \sum_{f_{k} \in F^{ext} } \int_{f_{k}}\Bigg(\frac{1}{8 Z_k}\Bigg( -\left(n_{k} \times  \tilde E_{k}^{0} \right) \cdot \left(n_{k} \times  \tilde  E_{k}^{0} \right)+ \left(n_{k} \times  \tilde  E_{k}^{M} \right) \cdot \left(n_{k} \times  \tilde  E_{k}^{M} \right)\Bigg)  \nonumber\\
&&+ \frac{1}{8 Y_k}\Bigg( -\left(n_{k} \times \tilde  H_{k}^{1/2} \right) \cdot \left(n_{k} \times \tilde   H_{k}^{1/2} \right)+ \left(n_{k} \times  \tilde  H_{k}^{M+1/2} \right) \nonumber\\
&& \cdot \left(n_{k} \times \tilde  H_{k}^{M+1/2} \right)\Bigg) 
+ \frac{1}{4} n_k \times \tilde{E}_{k}^{0} \cdot \tilde{H}_{k}^{1/2}-  \frac{1}{4} n_k \times \tilde{E}_{k}^{M} \cdot \tilde{H}_{k}^{M+1/2}\Bigg)
\, ds,
 \end{eqnarray*}
 which concludes the proof.
\end{proof}

\begin{theorem} \label{stab_theorem3D} Let us consider the leap-frog DG method (\ref{Fdis1})--(\ref{Fdis3}) complemented with the discrete boundary conditions defined in Section \ref{DG_method}. If the time step $\Delta t$ is such that
\begin{equation}\label{stab_cond3D}
\Delta t < \frac{\min\{\munderbar \epsilon, \munderbar\mu\}}{\max\{{C}_E, C_H\}} \min\{h_k\},
\end{equation}
where
$${C}_E=\frac12 C_{inv} N^2+C_{\tau}^2 (N+1)(N+3)\left(3+\frac{\beta_2}{2}+\frac{\alpha+\beta_1}{2\min\{Z_k\}}\right),$$
$${C}_H= \frac12 C_{inv} N^2+C_{\tau}^2 (N+1)(N+3)\left(3+\frac{\beta_2}{2}+\frac{\alpha+\beta_3}{2\min\{ Y_k\}}\right), $$
 with $C_{\tau}$ defined by (\ref{inq_trace_3D}) of Lemma \ref{trace}  and $C_{inv}$ defined by (\ref{inq_inv}) of Lemma \ref{inverse_ineq}, and  $\beta_1=\alpha, \beta_2=0$ for PEC, $\beta_1=0, \beta_2=1$, $\beta_3=\alpha$ for PMC,  and $\beta_1=\beta_2=\frac12$, $\beta_3=1$ for Silver-M\"uller boundary conditions, then the method is stable.
\end{theorem}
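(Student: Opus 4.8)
The plan is to follow the argument of Theorem~\ref{stab_theorem}, now in the vector-valued three-dimensional setting, starting from the summed energy identity and controlling the surface contributions through Lemmata~\ref{B13D} and~\ref{B23D}. First I would sum (\ref{cond3D}) over all elements $T_k\in\mathcal T_h$ and insert $\sum_{T_k}A_k^m=B_1^m+B_2^m$; this places the total discrete energy $\sum_k[(\epsilon\tilde E_k^M,\tilde E_k^M)_{T_k}+(\mu\tilde H_k^{M+1/2},\tilde H_k^{M+1/2})_{T_k}]$ on the left, and on the right the initial energy, the two volume curl cross terms $\Delta t(\nabla\times\tilde H_k^{1/2},\tilde E_k^0)_{T_k}$ and $-\Delta t(\nabla\times\tilde H_k^{M+1/2},\tilde E_k^M)_{T_k}$, and the accumulated surface term $2\Delta t\sum_{m=0}^{M-1}(B_1^m+B_2^m)$. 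Substituting the bound of Lemma~\ref{B13D} for $\sum_m B_1^m$ and of Lemma~\ref{B23D} for $\sum_m B_2^m$ then leaves only quantities evaluated at the four time levels $0$, $1/2$, $M$, $M+1/2$, so the telescoping precludes any accumulation over intermediate steps.

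Next I would estimate the surviving cross terms. For the volume curl terms I would apply Cauchy--Schwarz and Young's inequality and then the inverse inequality (\ref{inq_inv}) of Lemma~\ref{inverse_ineq} to replace $\|\nabla\times\tilde H_k\|_{L^2(T_k)}$ by $C_{inv}N^2 h_k^{-1}\|\tilde H_k\|_{L^2(T_k)}$, producing the $\tfrac12 C_{inv}N^2$ contribution shared by $C_E$ and $C_H$. The surface cross products $n_{ki}\times[\tilde H]\cdot[\tilde E]$ and $n_{ki}\times[\tilde H]\cdot\tilde E$ coming from Lemma~\ref{B13D}, and $n_k\times\tilde E\cdot\tilde H$ coming from Lemma~\ref{B23D}, are bounded using $|n\times v|\le|v|$ together with Cauchy--Schwarz, whereas the projector-type squares built from $(I-n_{ki}n_{ik}^T)$ and $(n_k\times\tilde E)\cdot(n_k\times\tilde E)$ already carry the favourable sign. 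Each face integral is then converted to a volume $L^2$-norm by the trace inequality (\ref{inq_trace_3D}) of Lemma~\ref{trace}, which supplies the tetrahedral factor $(N+1)(N+3)$ and the factor $h_k^{-1}$; the elementary bounds $Z_k/(Z_i+Z_k)\le1$ and $Y_k/(Y_i+Y_k)\le1$ discard the impedance weights except where they survive through the $\min\{Z_k\}$ and $\min\{Y_k\}$ denominators of the dissipative $\alpha$- and $\beta$-terms.

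Collecting the coefficients of $\|\tilde E^M\|_{L^2(\Omega)}^2$ and $\|\tilde H^{M+1/2}\|_{L^2(\Omega)}^2$, the electric face contributions assemble into $C_E$ and the magnetic ones into $C_H$. Bounding the energy from below by $\min\{\munderbar\epsilon,\munderbar\mu\}$ and the initial data from above by $\max\{\bar\epsilon,\bar\mu\}$, I arrive at an inequality whose left-hand side carries the factor $\min\{\munderbar\epsilon,\munderbar\mu\}-\tfrac{\Delta t}{\min\{h_k\}}\max\{C_E,C_H\}$ multiplying $\|\tilde E^M\|_{L^2(\Omega)}^2+\|\tilde H^{M+1/2}\|_{L^2(\Omega)}^2$, while the right-hand side is controlled by $\big(\max\{\bar\epsilon,\bar\mu\}+\tfrac{\Delta t}{\min\{h_k\}}C_0\big)\big(\|\tilde E^0\|_{L^2(\Omega)}^2+\|\tilde H^{1/2}\|_{L^2(\Omega)}^2\big)$ with $C_0$ the natural three-dimensional analogue of the constant appearing in Theorem~\ref{stab_theorem}. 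Requiring the left-hand factor to be positive gives precisely the stability condition (\ref{stab_cond3D}).

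The step I expect to demand the most care is the coefficient bookkeeping when passing from the face integrals of the two lemmata to volume norms: each cross term must be split by Young's inequality with a weight chosen so that, after the trace inequality inserts the $(N+1)(N+3)h_k^{-1}$ factor, the pieces landing on $\|\tilde E^M\|_{L^2(\Omega)}^2$ and $\|\tilde H^{M+1/2}\|_{L^2(\Omega)}^2$ sum to exactly the stated $C_E$ and $C_H$. In particular one must track the multiplicity producing the dimension-dependent constant $3$ (in place of the planar $2$), the reduced PMC/Silver--M\"uller weight $\beta_2/2$, and the dissipative denominators $2\min\{Z_k\}$ and $2\min\{Y_k\}$ with numerators $\alpha+\beta_1$ and $\alpha+\beta_3$. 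The argument is structurally identical to the two-dimensional proof; only the three-dimensional trace constant and the vector nature of the fields alter the numerology.
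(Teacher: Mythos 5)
Your proposal is correct and follows essentially the same route as the paper: sum the energy identity (\ref{cond3D}) over elements, invoke Lemmata \ref{B13D} and \ref{B23D} for the accumulated surface terms, bound the remaining cross terms at time levels $0$, $1/2$, $M$, $M+1/2$ via Cauchy--Schwarz/Young together with the inverse inequality (\ref{inq_inv}) and the tetrahedral trace inequality (\ref{inq_trace_3D}), and require positivity of the factor $\min\{\munderbar\epsilon,\munderbar\mu\}-\Delta t\,\max\{h_k^{-1}\}\max\{C_E,C_H\}$. The only cosmetic difference is that the paper places $\max\{C_E,C_H\}$ (rather than a smaller $C_0$) on the right-hand side of the final inequality, which does not affect the conclusion.
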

\begin{proof}

As for the 2D case, from (\ref{cond3D}) and the previous lemmata, considering  the Cauchy-Schwarz's and triangular inequality inequality, taking into account that $Z_i/(Z_i+Z_k)<1$, 
%
%\begin{eqnarray*}
%\lefteqn{\left \sum_{T_k \in \mathcal T_h}\left( \left(\epsilon \tilde{E}_k^{M},\tilde{E}_k^{M}\right)_{T_k}+\left(\mu \tilde{H}_{z_k}^{M+1/2},\tilde{H}_{z_k}^{M+1/2}\right)_{T_k}\right)}\\
%&&\le\sum_{T_k \in \mathcal T_h}\left(\left(\epsilon \tilde{E}_k^{0},\tilde{E}_k^{0}\right)_{T_k}+\left(\mu \tilde{H}_{z_k}^{1/2},\tilde{H}_{z_k}^{1/2}\right)_{T_k}\right)\\
%&&+\Delta t \sum_{T_k \in \mathcal T_h}\left(\|\nabla \times \tilde{H}_{z_k}^{1/2}\|_{L^2(T_k)}\|\tilde{E}_k^{0}\|_{L^2(T_k)} +\|\nabla \times \tilde{H}_{z_k}^{M+1/2}\|_{L^2(T_k)}\|\tilde{E}_k^{M}\|_{L^2(T_k)} \right)\\
%&&+3\Delta t \sum_{f_{ik} \in F^{int} } \Big( \|
%\tilde E^{M}_{k}\|_{L^2(f_{ik})}^2+\|\tilde H_{z_k}^{M+1/2}\|_{L^2(f_{ik})}^2+\|\tilde E^{0}_{k}\|_{L^2(f_{ik})}^2+\|\tilde H_{z_k}^{1/2}\|_{L^2(f_{ik})}^2\Big) \\
%&&+\frac{\alpha \Delta t}{\min\{Z_k\}}\sum_{f_{ik} \in F^{int} }  \| \tilde E_{k}^{M}\|^2_{L^2(f_{ik})} +\frac{\alpha\Delta t}{\min\{Y_k\}}\sum_{f_{ik} \in F^{int}}\| \tilde H_{z_k}^{M+1/2}\|_{L^2(f_{ik})}^2\\
%&&+\frac{\beta_1\Delta t}{2 \min\{Z_k\}}\sum_{f_{k} \in F^{ext}} \Big( \| \tilde E_k^0\|_{L^2(f_k)}^2+\| \tilde E_k^M\|_{L^2(f_k)}^2 \Big)\\
%&&+\frac{\beta_3\Delta t}{ 2\min\{Y_k\}}\sum_{f_{k} \in F^{ext}}\Big( \|\tilde H_{z_k}^{1/2}\|_{L^2(f_k)}^2 +\|\tilde H_{z_k}^{M+1/2}\|_{L^2(f_k)}^2   \Big)\\
%&&+ \frac{\beta_2\Delta t}{2}\sum_{f_{k} \in F^{ext}}\left( \|\tilde H_{z_k}^{1/2}\|_{L^2(f_k)}^2+\| \tilde E_{k}^{0}\|_{L^2(f_k)}^2 + \|\tilde H_{z_k}^{M+1/2}\|_{L^2(f_k)}^2+ \| \tilde E_{k}^{M}\|_{L^2(f_k)}^2\right).
% \end{eqnarray*}
% %
and using the inequality (\ref{inq_trace_3D}) of Lemma \ref{trace} and the inequality  (\ref{inq_inv}) of Lemma \ref{inverse_ineq} (both in Appendix),  we get
\begin{eqnarray*}
\lefteqn{ \min\{\munderbar\epsilon,\munderbar\mu\}\left(\|\tilde{E}^{M}\|^2_{\Omega}+\|\tilde{H}_{z}^{M+1/2}\|^2_{\Omega}\right)\le\max\{\bar\epsilon, \bar\mu\}\left(\|\tilde{E}^{0}\|^2_{\Omega}+\|\tilde{H}_{z}^{1/2}\|^2_{\Omega}\right)}\\
&&+\frac{\Delta t}{2} C_{inv} N^2 \max\left\{h_k^{-1} \right\} \left(\|\tilde{H}_{z}^{1/2}\|_{\Omega}^2 + \|\tilde{E}^{0}\|_{\Omega}^2 +\| \tilde{H}_{z}^{M+1/2}\|_{\Omega}^2+ \|\tilde{E}^{M}\|_{\Omega}^2 \right)\\
&&+C_{\tau}^2 (N+1)(N+3)\Delta t\max\left\{h_k^{-1} \right\}\left(3+\frac{\beta_2}{2}+\frac{\alpha+\beta_1}{2\min\{Z_k\}}\right)\left( \| \tilde E^0\|_{\Omega}^2 +  \| \tilde E^M\|_{\Omega}^2\right)\\
&&+C_{\tau}^2 (N+1)(N+3)\Delta t\max\left\{h_k^{-1} \right\}\left(3+\frac{\beta_2}{2}+\frac{\alpha+\beta_3}{2\min\{ Y_k\}}\right) \left( \| \tilde H_{z}^{1/2}\|_{\Omega}^2 +  \| \tilde H_{z}^{M+1/2}\|_{\Omega}^2\right).
 \end{eqnarray*}
and so
 \begin{eqnarray*}
&&\left(\min\{\munderbar \epsilon, \munderbar\mu\}-\Delta t \max\left\{h_k^{-1} \right\}\max\{{C}_E,  C_H\}\right)\left(\|\tilde{E}^{M}\|^2_{L^2(\Omega)}+\|\tilde{H}_{z}^{M+1/2}\|^2_{L^2(\Omega)}\right)\\
&&\qquad \le \left(\max\{\bar \epsilon, \bar\mu\}+\Delta t \max\left\{h_k^{-1} \right\}\max\{{C}_E,  C_H\}\right)\left(\|\tilde{E}^{0}\|^2_{L^2(\Omega)}+\|\tilde{H}_{z}^{1/2}\|^2_{L^2(\Omega)}\right),
\end{eqnarray*}
which proves the result. 
\end{proof}

%%%%%%%%%%%%%%%%%%%%%%%%%%%%%%%%%%%%%%%%%%%%%%%%%

%%%% Acknowledgments %%%%%%%%
\section*{Acknowledgments}
This work was partially supported by the Centre for Mathematics of the University of Coimbra -- UID/MAT/00324/2013, funded by the Portuguese Government through FCT/MEC and co-funded by the European Regional Development Fund through the Partnership Agreement PT2020; and by the Portuguese Government through FCT/MEC under the project PTDC/SAU-ENB/119132/2010 and the BD grant SFRH/BD/51860/2012.

%%%%%%%%%%%%%%%%%%%%%%%%%%%%%%%%%%%%%%%%%%%%%%%%%

\appendix\section{Technical lemmata}

The  lemmata included  this section are technical tools needed to derive the stability conditions.

We consider the following trace inequalities (see {\it e.g.} \cite{riviere2008}).
\begin{lemma}\label{trace}
Let  $T_k$ be an element  of $\mathcal T_h$ with diameter $h_k$ and let $f_k$ be  an edge  or a face  of $T_k$.
There exists a positive constant $C$ independent of $h_k$ such that, for any  $u \in H^1({T_k})$,
\begin{equation} \label{inq_trace1}
\|u\|_{L^2(f_k)}\leq   C  \sqrt{ \frac{|f_k| }{|T_k|}} \left(\|u\|_{L^2(T_k)}+ h_k \|\nabla u\|_{L^2(T_k)}\right).
\end{equation}
Moreover, if $u$ is a polynomials of degree less than or equal to $N$, there exists a positive constant $C_{trace}$  independent of $h_k$ and $u$ but dependent on the polynomials degree $N$, such that
$$
 \|u\|_{L^2(f_k)} \leq C_{trace} \sqrt{\frac{|f_k|}{|T_k|}}  \|u\|_{L^2(T_k)}.
$$
An exact expression for the constant  $C_{trace}$ can be given as a function of the polynomials degree, and the following inequality holds
 for any  $u \in P_N(T_k)$
\begin{equation}\label{inq_trace_exp}
\mbox{in } 2D:  \quad  \|u\|_{L^2(f_k)} \leq \sqrt{\frac{(N+1)(N+2)}{2} \frac{|f_k|}{|T_k|}}  \|u\|_{L^2(T_k)},
 \end{equation}
 \begin{equation}\label{inq_trace_exp_3D}
\mbox{in } 3D:  \quad  \|u\|_{L^2(f_k)} \leq \sqrt{\frac{(N+1)(N+3)}{3} \frac{|f_k|}{|T_k|}}  \|u\|_{L^2(T_k)}.
 \end{equation}
 Consequently, there exists a positive constant $C_{\tau}$ independent of $h_k$ and $N$ but depent on the shape-regularity $h_k/\tau_k$, where $\tau_k$ is the diameter of  the largest inscribed ball  contained in $T_k$ (see (\ref{tau})), such that,  for any  $u \in P_N(T_k)$,
 \begin{equation}\label{inq_trace}
\mbox{in } 2D: \quad \|u\|_{L^2(\partial T_k)} \leq C_{\tau} \sqrt{(N+1)(N+2)}  h_k^{-1/2}  \|u\|_{L^2(T_k)},
 \end{equation}
 \begin{equation}\label{inq_trace_3D}
\mbox{in } 3D:  \quad  \|u\|_{L^2(\partial T_k)} \leq C_{\tau} \sqrt{(N+1)(N+3)}  h_k^{-1/2}  \|u\|_{L^2(T_k)}.
 \end{equation}
\end{lemma}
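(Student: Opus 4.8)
The plan is to reduce every inequality to a statement on a fixed reference simplex $\hat T$ --- the unit triangle in 2D and the unit tetrahedron in 3D --- via the affine map $x = B_k\hat x + b_k$ carrying $\hat T$ onto $T_k$, and to treat the three regimes in turn: the generic $H^1$ bound (\ref{inq_trace1}), the sharp polynomial bound (\ref{inq_trace_exp})--(\ref{inq_trace_exp_3D}) (of which the intermediate estimate with a generic constant $C_{trace}$ is just the same inequality before the constant is made explicit), and the global $\partial T_k$ bound (\ref{inq_trace})--(\ref{inq_trace_3D}).

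First I would prove (\ref{inq_trace1}) by transport. On the fixed element the classical trace theorem gives $\|\hat u\|_{L^2(\hat f)} \le \hat C(\|\hat u\|_{L^2(\hat T)} + \|\hat\nabla\hat u\|_{L^2(\hat T)})$ for all $\hat u \in H^1(\hat T)$, with $\hat C$ depending only on $\hat T$. Pulling this back through $\hat u = u\circ(\hat x\mapsto B_k\hat x + b_k)$ and changing variables, the norms scale as $\|u\|_{L^2(T_k)}^2 = |\det B_k|\,\|\hat u\|_{L^2(\hat T)}^2$ and $\|u\|_{L^2(f_k)}^2 = (|f_k|/|\hat f|)\,\|\hat u\|_{L^2(\hat f)}^2$, while $\hat\nabla\hat u = B_k^T\nabla u$ yields $\|\hat\nabla\hat u\|_{L^2(\hat T)} \le \|B_k\|\,|\det B_k|^{-1/2}\,\|\nabla u\|_{L^2(T_k)}$. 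Since $|\det B_k| = |T_k|/|\hat T|$ and $\|B_k\| \le C h_k$, assembling these factors leaves precisely the prefactor $\sqrt{|f_k|/|T_k|}$ together with the companion term $h_k\|\nabla u\|_{L^2(T_k)}$, which is (\ref{inq_trace1}).

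The core of the lemma is the sharp polynomial estimate (\ref{inq_trace_exp})--(\ref{inq_trace_exp_3D}), in which the gradient contribution must be eliminated. Restricting to $u \in P_N(T_k)$ and mapping to $\hat T$ reduces the claim to the optimal reference bound $\|\hat u\|_{L^2(\hat f)}^2 \le \kappa_{N,d}\,(|\hat f|/|\hat T|)\,\|\hat u\|_{L^2(\hat T)}^2$, whose best value $\kappa_{N,d}\,|\hat f|/|\hat T|$ is the largest generalized eigenvalue $\lambda$ of $M_{\hat f}\,v = \lambda\,M_{\hat T}\,v$, where $M_{\hat T}$ and $M_{\hat f}$ are the volume and face mass matrices of a basis of $P_N(\hat T)$. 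Writing everything in the orthonormal Koornwinder--Dubiner basis (tensor products of Jacobi polynomials adapted to the collapsed coordinates of the simplex) turns $M_{\hat T}$ into the identity and makes the extremizer the $L^2(\hat T)$-reproducing kernel of $P_N$ concentrated on $\hat f$, so the eigenvalue collapses to an explicit sum of squared Jacobi values; this is the computation of Warburton and Hesthaven, and it evaluates to $\kappa_{N,d} = (N+1)(N+d)/d$, giving $(N+1)(N+2)/2$ in 2D and $(N+1)(N+3)/3$ in 3D. I expect this explicit eigenvalue identification to be the main obstacle, since it is the only step where the precise polynomial-degree dependence is extracted and no soft scaling argument can supply it; a one-dimensional check on $[-1,1]$, where the reproducing kernel gives $K_N(1,1) = (N+1)^2/2$, fixes the pattern and guards against off-by-a-factor errors.

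Finally, (\ref{inq_trace})--(\ref{inq_trace_3D}) follow by summing the per-face estimate over the $d+1$ faces of $T_k$, which gives $\|u\|_{L^2(\partial T_k)}^2 \le \frac{(N+1)(N+d)}{d}\,\frac{|\partial T_k|}{|T_k|}\,\|u\|_{L^2(T_k)}^2$, followed by control of the geometric factor through the shape-regularity assumption (\ref{tau}). Here I would estimate $|\partial T_k| \le C h_k^{d-1}$ and $|T_k| \ge c\,\tau_k^{\,d}$, the latter because $|T_k|$ dominates the volume of the inscribed ball of diameter $\tau_k$, so that $|\partial T_k|/|T_k| \le C h_k^{d-1}/\tau_k^{\,d} \le C\tau^{\,d} h_k^{-1}$ once $\tau_k \ge h_k/\tau$ is inserted. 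Absorbing $c$, $C$, $\tau$ and the factor $1/d$ into a single constant $C_\tau$ and taking square roots produces the advertised $h_k^{-1/2}$ scaling with $C_\tau$ independent of both $h_k$ and $N$, completing the plan.
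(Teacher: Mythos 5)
The paper never proves this lemma: it is quoted as a known technical tool, with the $H^1$ trace inequality and its scaled form attributed to the literature (\cite{riviere2008}), and the sharp polynomial constants $(N+1)(N+2)/2$ and $(N+1)(N+3)/3$ being exactly those of Warburton and Hesthaven's work on $hp$ trace inverse inequalities. Your proposal is a correct reconstruction of how those cited results are actually established, and all three stages are sound: the affine-scaling argument for (\ref{inq_trace1}) is complete and the norm-transformation factors ($|\det B_k|=|T_k|/|\hat T|$, $\|B_k\|\le C h_k$, face Jacobian $|f_k|/|\hat f|$) are handled correctly; the reduction of (\ref{inq_trace_exp})--(\ref{inq_trace_exp_3D}) to the largest generalized eigenvalue of the face-mass/volume-mass pencil on the reference simplex is the right formulation, and your stated value $\kappa_{N,d}=(N+1)(N+d)/d$ together with the 1D Legendre check $(N+1)^2/2$ is accurate; and the passage to (\ref{inq_trace})--(\ref{inq_trace_3D}) by summing over the $d+1$ faces and using $|\partial T_k|/|T_k|\le C\tau^d h_k^{-1}$ via the shape-regularity bound $\tau_k\ge h_k/\tau$ is exactly the standard argument, with the $1/d$ factor correctly absorbed into $C_\tau$. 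The one place where your proof is an appeal rather than a derivation is the same place the paper itself appeals to the literature: the explicit eigenvalue evaluation. Be aware, though, that your description of that step is slightly loose --- the extremizer for a face (as opposed to a point in 1D) is not simply the reproducing kernel ``concentrated on $\hat f$''; the actual Warburton--Hesthaven argument decomposes $P_N(\hat T)$ in a basis adapted to the face and reduces to a one-dimensional Jacobi-polynomial problem in the coordinate normal to the face. Since you cite the correct source and the correct value, and the paper does no more, this is a presentational imprecision rather than a gap.
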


The next result is an inverse-type estimate (\cite{ciarlet1980, georgoulis2008}), where we present explicitly the dependence of the constant on the polynomials degree.
\begin{lemma}\label{inverse_ineq}
Let us consider $T_k  \in \mathcal T_h$  with diameter $h_k$. There exists a positive constant $C_{inv}$ independent of $h_k$ and  $N$ such that, for any $u \in P_N(T_k)$,
\begin{equation}\label{inq_inv}
 \|u\|_{H^q(T_k)}\leq C_{inv} N^{2q}  h_k^{-q}   \| u\|_{L^2(T_k)},
\end{equation}
where $q\geq 0$.
 \end{lemma}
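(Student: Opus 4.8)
The plan is to reduce the estimate to a fixed reference element by an affine change of variables, prove the sharp polynomial-degree dependence there, and then scale back. Let $\hat T$ be a reference simplex (triangle in 2D, tetrahedron in 3D) and let $F_k(\hat x)=B_k\hat x+b_k$ be the affine bijection from $\hat T$ onto $T_k$. For a shape-regular family satisfying (\ref{tau}) one has $\|B_k\|\lesssim h_k$, $\|B_k^{-1}\|\lesssim h_k^{-1}$ and $|\det B_k|\simeq |T_k|\simeq h_k^d$ (with $d\in\{2,3\}$), the constants depending only on the regularity constant $\tau$. Setting $\hat u=u\circ F_k\in P_N(\hat T)$, the chain rule expresses each spatial derivative of order up to $q$ as a combination of reference derivatives weighted by entries of $B_k^{-1}$, whence the two scaling bounds
\begin{equation*}
\|u\|_{H^q(T_k)}\le C(\tau)\,h_k^{-q}\,|\det B_k|^{1/2}\,\|\hat u\|_{H^q(\hat T)},\qquad \|\hat u\|_{L^2(\hat T)}\le C(\tau)\,|\det B_k|^{-1/2}\,\|u\|_{L^2(T_k)},
\end{equation*}
in which the highest-order term dominates and the lower-order contributions carry smaller negative powers of $h_k$.

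It then suffices to prove the purely polynomial estimate $\|\hat u\|_{H^q(\hat T)}\le C_{inv}N^{2q}\|\hat u\|_{L^2(\hat T)}$ on the fixed domain $\hat T$, with $C_{inv}$ independent of $N$. The heart of the matter is the first-order case $\|\nabla\hat u\|_{L^2(\hat T)}\le C_{inv}N^2\|\hat u\|_{L^2(\hat T)}$, where the exponent $2$ is precisely the Markov exponent for differentiation of polynomials. To obtain it with the correct rate I would expand $\hat u$ in an $L^2(\hat T)$-orthogonal basis of $P_N(\hat T)$ built from Jacobi polynomials (the Koornwinder--Dubiner basis on the simplex), for which the effect of differentiation on the expansion coefficients is explicit, and reduce to the one-dimensional Markov-type inequality $\|p'\|_{L^2}\le CN^2\|p\|_{L^2}$ transported through the collapsed-coordinate construction. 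This is exactly the sharp $hp$ inverse estimate of \cite{georgoulis2008}; the classical argument of \cite{ciarlet1980} yields the same polynomial order on the reference element.

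To pass to general $q$ I would iterate the first-order bound: for integer $q$ each of the $q$ applications contributes one factor $N^2$, producing $N^{2q}$, and for non-integer $q$ the estimate follows by interpolating between consecutive integer orders on the fixed element. Combining this reference bound with the two scaling inequalities cancels the $|\det B_k|^{1/2}$ factors and recovers the claimed $C_{inv}N^{2q}h_k^{-q}\|u\|_{L^2(T_k)}$. The main obstacle is the second paragraph: securing the \emph{sharp} $N^2$ growth per derivative rather than an uncontrolled $N$-dependence. A bare norm-equivalence argument on the finite-dimensional space $P_N(\hat T)$ only yields a constant that depends on $N$ in an unquantified way, so the polynomial rate genuinely requires a Markov-type inequality or an explicit orthogonal-expansion computation, and that is where essentially all the work of the lemma resides.
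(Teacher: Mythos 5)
Your argument is correct and is essentially the standard one: the paper itself gives no proof of this lemma, deferring entirely to the cited references \cite{ciarlet1980, georgoulis2008}, and your sketch (affine scaling to a reference simplex, a sharp $N^2$-per-derivative inverse estimate there via orthogonal Jacobi/Dubiner expansions and the one-dimensional $L^2$ Markov inequality, then iteration over integer orders and interpolation for non-integer $q$) reconstructs exactly the argument found in those references. You also correctly identify the one genuinely delicate point, namely that the sharp $N^{2}$ rate on the reference element cannot be obtained from a bare finite-dimensional norm equivalence and requires the Markov-type/orthogonal-expansion computation.
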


%Note that $C_{inv}$ depends on the shape-regularity $h_k/\tau_k$, where $\tau_k$ is the diameter of  the largest inscribed ball  contained in $T_k$.  A sharper estimate reads
%$$
%\forall  u \in P_N(T_k), \quad \|u\|_{H^q(T_k)}\leq \tilde C_{inv}  N^{2q} \tau_k^{-q} \| u\|_{L^2(T_k)}.
%$$

\bibliographystyle {plain}
\bibliography{Max}

\end{document}